\newtheorem{thm}{Theorem}
\newtheorem{cor}[thm]{Corollary}
\newtheorem{lem}[thm]{Lemma}
\newtheorem{prop}[thm]{Proposition}
\theoremstyle{definition}
\newtheorem{defn}[thm]{Definition}
\theoremstyle{remark}
\newtheorem*{remark*}{Remark}
\newcommand\<{\begin{equation}} \renewcommand\>{\end{equation}}
\newcommand\abs[1]{\left\vert#1\right\vert}
\newcommand\norm[1]{\left\Vert#1\right\Vert}
\newcommand\floor[1]{{\left\lfloor#1\right\rfloor}}
\newcommand\ceil[1]{{\left\lceil#1\right\rceil}}
\let\tilde\widetilde
\renewcommand\bar[1]{\,\overline{\!#1\!}\,}
\renewcommand\dots{...} \renewcommand\ldots{...}
\newcommand\C{{\mathbb C}}
\newcommand\D{{\mathbb D}}
\newcommand\N{{\mathbb N}}
\newcommand\R{{\mathbb R}}
\newcommand\Sb{{\mathbb S}}
\newcommand\Z{{\mathbb Z}}
\newcommand\Fc{{\mathcal F}}
\newcommand\Tc{{\mathcal T}}
\newcommand\Piecewise[2][]{\left\{\begin{array}{ll} #2 \ifthenelse{\equal{#1}{}}{}{\\ #1 & \text{otherwise}}\end{array}\right.}
\renewcommand\P{{\bar P}} \newcommand\Q{{\bar Q}} \newcommand\A{{\bar A}}
\renewcommand\i{{\rm i}}
\providecommand\CylI[2]{{ I_{\raisebox{-1pt}{\scriptsize\!$#1$}}^{\raisebox{1pt}{\scriptsize$#2$}} }}
\newcounter{commentcounter}\newcommand\COMMENT[2][red]{\stepcounter{commentcounter}\rlap{\smash{$^{\fcolorbox{#1}{#1!15}{\scriptsize\ifthenelse{\equal{#1}{green}}{\color{green!75!black}}{\color{#1}}\!\!{\bf\thecommentcounter}\!\!}}$}}\marginpar{\!\!\parbox{2.8cm}{\raggedright\small \ifthenelse{\equal{#1}{green}}{\color{green!67!black}}{\color{#1}} \textbf{\thecommentcounter.}\,#2}}} \usepackage{silence} \WarningFilter{latex}{Marginpar on page}
\begin{document}

\title[Rigidity of topological entropy]{Rigidity of topological entropy of boundary maps associated to Fuchsian groups}
\author{Adam Abrams}
\address{Faculty of Pure and Applied Mathematics, Wroc\l{}aw University of Science and Technology, Wroc\l{}aw, 50370, Poland}
\email{the.adam.abrams@gmail.com}
\author{Svetlana Katok}
\address{Department of Mathematics, The Pennsylvania State University, University Park, PA 16802, USA}
\email{sxk37@psu.edu}
\author{Ilie Ugarcovici}
\address{Department of Mathematical Sciences, DePaul University, Chicago, IL 60614, USA}
\email{iugarcov@depaul.edu}

\thanks{The second author was partially supported by NSF grant DMS 1602409.}
\keywords{Fuchsian groups, boundary maps, topological entropy, constant slope}
\subjclass[2010]{37D40, 37E10}

\dedicatory{Dedicated to the memory of Anatole Katok}

\begin{abstract}
Given a closed, orientable surface of constant negative curvature and genus $g \ge 2$, we study a family of generalized Bowen--Series boundary maps and prove the following rigidity result: in this family the topological entropy is constant and depends only on the genus of the surface. We give an explicit formula for this entropy and show that the value of the topological entropy also stays constant in the Teichm\"uller space of the surface. The proofs use conjugation to maps of constant slope.
\end{abstract}

\maketitle

\section{Introduction}\label{sec intro}

The notion of topological entropy was introduced by Adler, Konheim, and McAndrew in~\cite{AKM}. Their definition used covers and applied to compact Hausdorff spaces; Dinaburg~\cite{D70} and Bowen~\cite{B71} gave definitions involving distance functions and separated sets, which are often more suitable for calculations. While these formulations of topological entropy were originally intended for continuous maps acting on compact spaces, Bowen's definition can actually be applied to piecewise continuous, piecewise monotone maps on an interval, as explained in~\cite{MZ}. The theory naturally extends to maps of the circle, where piecewise monotonicity is understood to mean local monotonicity or, equivalently, having a piecewise monotone lift to $\R$.

In~\cite{Parry66}, following his seminal work~\cite{Parry64} on Markov maps, Parry showed that a piecewise monotone, (strongly) transitive interval map with positive topological entropy is conjugate to a constant slope map. In~\cite{MTh}, Milnor and Thurston used kneading theory to prove a semi-conjugacy result for continuous, piecewise monotone, but not necessarily transitive, interval maps.
In~\cite{AM}, following~\cite{ALM}, Alsed\`a and Misiurewicz give a simpler proof that also generalizes to piecewise continuous, piecewise monotone interval maps.

In this paper we apply the results of~\cite{Parry66,AM} to a multi-parameter family of piecewise continuous, piecewise monotone maps of the circle, the so-called ``boundary maps'' for surfaces of constant negative curvature, as in \cite{KU17}. Some particular maps in this family---including those considered by Bowen and Series \cite{BS79} and further studied by Adler and Flatto \cite{AF91}---are Markov, and the topological entropy can be calculated as the logarithm of the maximal eigenvalue of a transition matrix \cite[Theorem 7.13]{W} in these cases. However, not all maps in our family admit a Markov partition, and yet we prove the following rigidity result: in this family, the topological entropy is constant and depends only on the genus of the surface. Therefore, the topological entropy in these non-Markov cases is the same logarithmic expression. We also show that topological entropy stays constant in the Teichm\"uller space of the surface.

Let $\Gamma$ be a finitely generated cocompact Fuchsian group of the first kind acting freely on the unit disc $\D=\{\, z \in \C : \abs z < 1 \,\}$ endowed with hyperbolic metric $\tfrac{2 \abs{dz} }{1-{\abs z}^2}$ such that $S=\Gamma\backslash\D$ is a surface of genus $g\ge 2$.

A classical (Ford) fundamental domain for $\Gamma$ is a $4g$-sided regular polygon centered at the origin. In~\cite{AF91}, Adler and Flatto used another fundamental domain---an $(8g-4)$-sided polygon $\Fc$---that was much more convenient for their purposes. Its sides are geodesic segments which satisfy the {\em extension condition}: the geodesic extensions of these segments never intersect the interior of the tiling sets $\gamma\Fc$, $\gamma\in \Gamma$.

\begin{figure}[hbt]
    \includegraphics[width=0.67\textwidth]{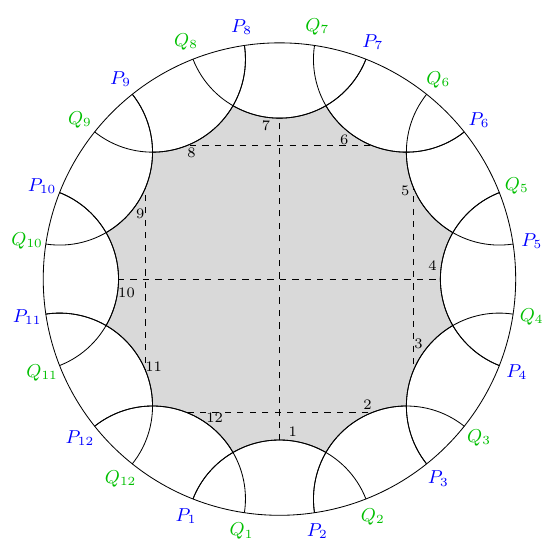}
    \caption{Fundamental polygon $\Fc$ for genus $g=2$}
    \label{fig polygon}
\end{figure}
We denote the endpoints of the oriented infinite geodesic that extends side $k$ to the circle at infinity~$\partial \D$ by $P_k$ and $Q_{k+1}$, where $1\le k\le 8g-4$ is considered mod $8g-4$ throughout this paper (see \Cref{fig polygon}). The counter-clockwise order of endpoints on $\partial\D$ is the following:
\[ P_1, Q_1, P_2, Q_2, \ldots, Q_{8g-4}. \]
The identification of the sides of $\Fc$ is given by the side pairing rule
\[ \sigma(k) := \left\{ \begin{array}{ll}
    4g-k \bmod (8g-4) & \text{ if $k$ is odd} \\
    2-k \bmod (8g-4) & \text{ if $k$ is even.}
\end{array} \right. \]
The generators $T_k$ of $\Gamma$ associated to this fundamental domain are M\"obius transformations satisfying the following properties: denoting $\rho(k) = \sigma(k)+1$ and with $V_k$ as the vertex of~$\Fc$ where sides $k\!-\!1$ and $k$ meet,
\begin{align*}
    T_k(V_k) &= V_{\rho(k)}, &
    T_{\sigma(k)}T_k &= \mathrm{Id}, &
    T_{\rho^3(k)}T_{\rho^2(k)}T_{\rho(k)}T_k &= \mathrm{Id}.
\end{align*}

\begin{remark*}
As functions on $\overline\D \subset \C$, the generators $T_k$ are M\"obius transformations, but restricted to the boundary $\Sb$ they are real functions of the arguments (but \emph{not} fractional linear transformations of the arguments). To simplify notation we will use ``$T_k$'' in both situations: $T_k(z)$ with $z \in \partial\D$ for complex (multiplicative) notation and $T_k(x):=\arg(T_k(e^{\i x}))$ with $x \in \Sb= \R/{2\pi\Z}$ for real (additive) notation. See the left of \Cref{fig T and S} for a plot of $y=T_k(x)$ with~$x,y \in [-\pi,\pi]$.
\end{remark*}

Notice that in general the polygon $\Fc$ need not be regular. In fact, one of the definitions of Teichm\"uller space, used in~\cite{AKU-Flexibility}, is the space of all marked canonical hyperbolic $(8g-4)$-gons in the unit disk~$\D$ (up to an isometry of $\D$) such that side~$k$ and side~$\sigma(k)$ have equal length and the internal angles at vertices~$V_k$ and~$V_{\sigma(k)+1}$ sum to~$\pi$. (The topology on the space of polygons is as follows: ${\mathcal P}_n \to {\mathcal P}$ if and only if the lengths of all sides converge and the measures of all angles converge.)

If $\Fc$ is regular, it is the Ford fundamental domain, i.e., the geodesic from $P_k$ to $Q_{k+1}$ (which we denote as just $P_k Q_{k+1}$) is the isometric circle for $T_k$, and $T_k(P_k Q_{k+1}) = Q_{\sigma(k)+1}P_{\sigma(k)}$ is the isometric circle for $T_{\sigma(k)}$ so that the inside of the former isometric circle is mapped to the outside of the latter, and all internal angles of~$\Fc$ are equal to~$\pi/2$. See~\cite{AK19} for more details and \Cref{sec generators} for additional properties of the generators~$T_k$.

The object of our study is the family of generalized Bowen--Series boundary maps studied in~\cite{KU17,AK19,A20,AKU-Flexibility} and defined by the formula
\< \label{fA definition}
    f_\A (x)=T_k(x) \quad\text{if } x \in [A_k,A_{k+1}),
\>
where
\[ \A=\{A_1,A_2,\dots,A_{8g-4}\}\quad\text{and}\quad A_k\in [P_k,Q_k]. \]
When all $A_k = P_k$ we denote the map by $f_\P$ (this map is what Adler and Flatto~\cite{AF91} refer to as ``the Bowen--Series boundary map,'' although Bowen and Series' construction~\cite{BS79} used $4g$-sided polygons).
In~\cite{AKU-Flexibility} we analyzed how the measure-theoretic entropy with respect to the smooth invariant measure~$\mu_\A$ of maps in this family changes in the Teichm\"uller space of $S$ and proved a flexibility result: the entropy $h_{\mu_\A}(f_\A)$ takes all values between $0$ and a maximum that is achieved on the surface that admits a regular $(8g-4)$-sided fundamental polygon. In contrast, the main result of this paper is rigidity of topological entropy: its value depends only on the genus of the surface, remains constant in the Teichm\"uller space $\Tc(S)$, and does not depend on the (multi-)parameter $\bar A$.

\begin{samepage}
\begin{thm}[Main Theorem] \label{thm main} 
    Let $\Gamma$ be a cocompact torsion free Fuchsian group such that $S=\Gamma\backslash\D$ is a surface of genus $g\ge 2$. For any $\bar A = \{A_1,\dots,A_{8g-4}\}$ with $A_k \in [P_k,Q_k]$, the map $f_\A : \Sb \to \Sb$ has topological entropy $h_\mathrm{top}(f_\A) = \log( 4g-3 + \sqrt{(4g-3)^2-1} )$.\footnote{The quantity $\log( 4g-3 + \sqrt{(4g-3)^2-1} )$ can also be expressed as $\operatorname{arccosh}(4g-3)$, but log\-arithm expressions are more common for entropies in general and especially for shifts, so we use the longer expression.}
\end{thm}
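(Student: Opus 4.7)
The plan is to invoke the Alsed\`a--Misiurewicz theorem from~\cite{AM}: a piecewise continuous, piecewise monotone, topologically transitive circle map with positive topological entropy is semiconjugate via a monotone degree-one map to a map of constant slope $s$, with $h_{\mathrm{top}} = \log s$. The proof of the Main Theorem then reduces to (i) verifying the hypotheses of this theorem for every $f_\A$, and (ii) identifying the slope $s$ as $4g-3 + \sqrt{(4g-3)^2-1}$, independently of both the parameter $\A$ and the position of $\Fc$ in $\Tc(S)$.

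For (i), on each arc $[A_k, A_{k+1})$ the map $f_\A$ agrees with the M\"obius transformation $T_k$, so $f_\A$ is piecewise continuous and piecewise monotone with exactly $8g-4$ branches. The extension condition for the sides of $\Fc$, combined with the restriction $A_k \in [P_k, Q_k]$, forces each branch to sit in the expanding region of $T_k$ on $\Sb$; uniform expansion then yields topological transitivity and positive topological entropy by standard arguments.

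For (ii), I would first treat the reference case $\A = \P$, where $f_\P$ is the Adler--Flatto form of the Bowen--Series map and admits a finite Markov partition. The associated $(8g-4)\times(8g-4)$ transition matrix has a block structure dictated by the pairing $\sigma$ and the dihedral symmetry of the combinatorial data of $\Fc$. Using this symmetry the characteristic polynomial reduces to one with quadratic factor $\lambda^2 - 2(4g-3)\lambda + 1$, whose dominant root is $4g-3 + \sqrt{(4g-3)^2-1}$; \cite[Theorem 7.13]{W} then yields $h_{\mathrm{top}}(f_\P) = \log(4g-3+\sqrt{(4g-3)^2-1})$.

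The main obstacle is extending this to general $\A$, where no finite Markov partition need exist. I would compare lap-number growth rates, using the identity $h_{\mathrm{top}}(f_\A) = \lim_n \frac{1}{n}\log \ell_n(f_\A)$ valid for piecewise monotone maps (cf.~\cite{AM}). The admissible itineraries of $f_\A$ form a transition-type system whose combinatorial backbone --- determined by the side-pairing $\sigma$ and the extension condition --- coincides with that of $f_\P$, while only the boundary points $A_k$ of the monotonicity arcs move within $[P_k, Q_k]$. Showing that this displacement contributes only subexponential corrections to $\ell_n(f_\A)$ (equivalently, that the constant-slope models of $f_\A$ and $f_\P$ share the same slope) would complete the proof of rigidity in $\A$. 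The same argument simultaneously handles deformations in $\Tc(S)$, since $\sigma$ and the extension condition are preserved throughout.
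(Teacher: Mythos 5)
Your reduction of the Markov case $\A=\P$ is essentially the paper's Section 3 (the paper works with the $16g-8$ intervals $[P_k,Q_k]$, $[Q_k,P_{k+1}]$ and a word-counting recurrence $N_{n+1}=(8g-6)N_n-N_{n-1}$, which is exactly your quadratic $\lambda^2-2(4g-3)\lambda+1$), so that part is sound. The genuine gap is your final step, which is the heart of the theorem: you assert that moving the endpoints $A_k$ inside $[P_k,Q_k]$ ``contributes only subexponential corrections'' to the lap numbers, i.e.\ that the constant-slope models of $f_\A$ and $f_\P$ share the same slope, but you give no argument for this, and it is not something that follows from the combinatorial backbone being ``the same.'' The symbolic dynamics of $f_\A$ genuinely depends on $\A$ (the paper points out these maps need not be topologically conjugate), there is in general no finite Markov partition, and a priori truncating or shifting branch domains can change the growth rate of itineraries; so ``same backbone $\Rightarrow$ same exponential rate'' is precisely what must be proved. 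Likewise, your appeal to transitivity and positive entropy of $f_\A$ for arbitrary $\A$ (needed to apply Alsed\`a--Misiurewicz directly to $f_\A$) is only asserted; the paper cites transitivity only for $f_\P$ and is structured so as never to need it for general $\A$.

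The paper closes this gap by a different mechanism. It computes the entropy for \emph{all} extremal parameters, builds the Parry conjugacy $\psi_\P$ for $f_\P$, and proves the nontrivial fact $\psi_\P=\psi_\Q$ by recoding every $\P$-cylinder as a union of $\Q$-cylinders (the technical appendix). From this it deduces translational and central symmetries of $\psi_\P$ and then the key lemma: $\psi_\P\circ T_k\circ\psi_\P^{-1}$ is linear with slope $\lambda$ on the \emph{entire} arc $[P_k,Q_{k+1}]$ (and slope $1/\lambda$ on its complement), not merely on the arc where $f_\P$ uses $T_k$. Since every branch domain $[A_k,A_{k+1})$ of $f_\A$ sits inside $[P_k,Q_{k+1})$, the single map $\psi_\P$ conjugates \emph{every} $f_\A$ to a piecewise affine map of constant slope $\lambda$, and Misiurewicz--Szlenk gives $h_{\mathrm{top}}(f_\A)=\log\lambda$; the Teichm\"uller statement then follows by conjugating with the Fenchel--Nielsen homeomorphism, as you indicate. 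To repair your proposal you would need either this linearization-on-the-full-arc argument (or an equivalent one) or an honest lap-number comparison between $f_\A$ and $f_\P$; as written, the rigidity in $\A$ is assumed rather than proved.
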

\end{samepage}

\begin{remark*} 
Most previous results on boundary maps $f_\A:\Sb \to \Sb$ require the parameters~$\A$ to be in a smaller class: \cite{AF91} uses only $\A=\bar P$ and $\A=\bar Q$, \cite{A20} focuses on extremal parameters and their duals, and~\cite{AK19,AKU-Flexibility} require that the parameters have the short cycle property. In this paper \Cref{thm main} applies to \emph{all} parameters~$\A$ with~$A_k \in [P_k,Q_k]$. Although our result shows that all maps $f_\A$ have the same topological entropy for a given genus $g$, they are not necessarily topologically conjugate, since, according to \cite{KU17}, the combinatorial structure of the orbits associated  to the discontinuity points $A_k$ could differ.
\end{remark*}

The paper is organized as follows. In Sections~\ref*{sec generators}--\ref*{sec conjugacy} we restrict ourselves to the case when $\Gamma$ admits a regular $(8g-4)$-sided fundamental polygon. In \Cref{{sec generators}}
we give the formulas for generators $T_k$ as functions on $\overline\D \subset \C$ (\Cref{thm Tk formulas}) and prove two additional symmetric properties of generators as functions on $\partial \D$. In \Cref{sec Markov} we compute the maximal eigenvalue of the transition matrices for all ``extremal'' parameters and hence the topological entropy for these Markov cases. In \Cref{sec conjugacy} we prove some symmetric properties of the map $\psi_\P$ conjugating $f_\P$ to a constant slope map. We conclude that $\psi_\P$ actually conjugates all $f_\A$ to constant slope maps, and in \Cref{sec end} we use this to prove \Cref{thm main}, first for $\Gamma$ admitting regular $(8g-4)$-sided fundamental polygons and then in the fully general case. A technical result stated and used in \Cref{sec conjugacy} is proved in \Cref{sec appendix}.

\section{Additional properties of generators}\label{sec generators}

\begin{prop}\label{thm Tk formulas}
If the $(8g-4)$-sided fundamental polygon $\Fc$ is regular, then the generators $T_k$ of the group $\Gamma$ are given as functions on $\overline\D \subset \C$ by
\< \label{T} T_k(z) = (-1)^{k+1} \frac{e^{\i(1-k)\alpha}z + \i \sqrt{\cos\alpha}}{(-\i \sqrt{\cos\alpha})z + e^{\i(k-1)\alpha}}, \quad\text{where }\alpha := \frac{2\pi}{8g-4}. \>
\end{prop}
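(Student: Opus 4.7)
My plan is to verify the claimed formula by direct check of three ingredients: disk-preservation, explicit vertex coordinates, and the required action on vertices. First, the coefficient matrix
\[
M_k = \begin{pmatrix}
e^{\i(1-k)\alpha} & \i\sqrt{\cos\alpha} \\
-\i\sqrt{\cos\alpha} & e^{\i(k-1)\alpha}
\end{pmatrix}
\]
has the $\mathrm{SU}(1,1)$-shape $\begin{pmatrix} a & b \\ \bar b & \bar a \end{pmatrix}$ with $|a|^2 - |b|^2 = 1 - \cos\alpha > 0$, so it defines a disk-preserving M\"obius transformation. For even $k$, the overall sign $(-1)^{k+1}=-1$ can be absorbed into a scalar multiplication by $\i$ (which does not alter the induced M\"obius map), so the full expression preserves $\overline\D$ in every case.

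Next I would locate the vertices. Because the relation $T_{\rho^3(k)}T_{\rho^2(k)}T_{\rho(k)}T_k=\mathrm{Id}$ groups vertices into cycles of four, every interior angle of $\Fc$ equals $\pi/2$. Hyperbolic trigonometry applied to the right triangle formed by the origin, $V_k$, and the midpoint of an incident edge gives circumradius $\cosh d_v = \cot(\alpha/2)$, hence Euclidean distance
\[ r = \tanh(d_v/2) = \frac{\cos(\alpha/2)-\sin(\alpha/2)}{\sqrt{\cos\alpha}} \]
of each vertex from the origin. The formula itself pins down the orientation: the isometric circle of the $k=1$ map is centered at $-\i/\sqrt{\cos\alpha}$, which forces the midpoint of side $1$ to lie on the negative imaginary axis, so $V_k = r\,e^{\i(-\pi/2 + (k - 3/2)\alpha)}$. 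A direct computation using $(4g-2)\alpha = \pi$ then yields the clean uniform identity
\[ V_{\rho(k)} = (-1)^{k+1}\,\overline{V_k}. \]

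To conclude I would verify $T_k(V_k)=V_{\rho(k)}$ by substitution. Using the identity above, all complex exponentials cancel and the claim reduces to the single real equation
\[ \sqrt{\cos\alpha}\,(1+r^2) = 2r\cos(\alpha/2), \]
which holds because both sides equal $(1+\cos\alpha-\sin\alpha)/\cos\alpha$. A disk-preserving M\"obius transformation is determined by its image at one interior point together with its isometric circle; since both match those of the side-$k$ pairing, the formula must coincide with $T_k$. The main technical obstacle is the trigonometric bookkeeping needed to establish $V_{\rho(k)}=(-1)^{k+1}\overline{V_k}$ uniformly across the two parities (where $\rho$ has different algebraic descriptions); once that symmetry is observed, the remaining verification collapses to the one-line identity above.
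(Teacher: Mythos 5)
Your route is genuinely different from the paper's. The paper derives the coefficients $a,c$ of $T_k(z)=(az+\bar c)/(cz+\bar a)$ directly from the Ford-domain data: the known direction of the center and the radius of the isometric circle of $T_k$ (the radius being quoted from \cite{KU17}), together with the fact that $T_k$ maps its isometric circle onto that of $T_{\sigma(k)}$, which pins down $\arg a$ after a parity case analysis. You instead \emph{verify} the stated formula: you check it induces a disk automorphism, compute the vertex positions from scratch ($\cosh d_v=\cot(\alpha/2)$, hence $r=(\cos(\alpha/2)-\sin(\alpha/2))/\sqrt{\cos\alpha}$), prove the pairing symmetry $V_{\rho(k)}=(-1)^{k+1}\overline{V_k}$ (which does hold, using $(4g-2)\alpha=\pi$, and neatly absorbs the parity cases), reduce $T_k(V_k)=V_{\rho(k)}$ to the single identity $\sqrt{\cos\alpha}\,(1+r^2)=2r\cos(\alpha/2)$, and conclude by a uniqueness principle. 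This buys independence from the radius formula of \cite{KU17} and avoids the odd/even case split; the identity itself is correct (small slip: both sides equal $(1+\cos\alpha-\sin\alpha)/\sqrt{\cos\alpha}$, not $/\cos\alpha$).

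The one incomplete step is the final identification. Your uniqueness principle is valid (disk M\"obius maps sharing an isometric circle differ by a rotation about $0$, so one nonzero image value determines the map), the true generator's isometric circle is the geodesic carrying side $k$ by the Ford property of the regular $\Fc$, and $T_k(V_k)=V_{\rho(k)}$ is part of the setup. But you never verify that the formula's isometric circle coincides with that geodesic for general $k$: you only record its center for $k=1$, and you use that to fix the orientation convention rather than to compare the two circles. You still need that the circle $\abs{-\i\sqrt{\cos\alpha}\,z+e^{\i(k-1)\alpha}}=\sqrt{1-\cos\alpha}$ passes through $V_k$ and $V_{k+1}$. Fortunately this is the same identity again: with $V_k=-\i\,r\,e^{\i(k-3/2)\alpha}$ one computes
\begin{equation*}
\abs{-\i\sqrt{\cos\alpha}\,V_k+e^{\i(k-1)\alpha}}^2
=1-2r\sqrt{\cos\alpha}\cos(\alpha/2)+r^2\cos\alpha,
\end{equation*}
which equals $1-\cos\alpha$ precisely when $\sqrt{\cos\alpha}\,(1+r^2)=2r\cos(\alpha/2)$, and $V_{k+1}$ then lies on the circle by symmetry about the center direction $-\pi/2+(k-1)\alpha$. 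Adding these two lines closes the gap and makes your argument complete.
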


\begin{proof}
We derive a formula for $T_k(z)$ based on some geometric considerations also presented in~\cite[Section~4.3]{K92} and \cite[Appendix]{KU17}.

Let $T_k(z)=(az+\overline c)/(c z+\overline a)$, where $\abs{a}^2-\abs{c}^2=1$. The isometric circle $P_kQ_{k+1}$ of $T_k$, also denoted $I(T_k)$, is given by the equation $\abs{c z+\overline a}=1$, has center $O_k$ located at $-\overline a/c$ with $\arg(-\overline a/c)=-\frac{\pi}{2}+(k-1)\alpha$ and radius $R=1/\abs{c}$.

Let $d=\abs{a}/\abs{c}$ be the distance from the origin $O$ to the center $O_k$ of $I(T_k)$. The following formula for $R$ was obtained in \cite[Appendix]{KU17}: 
\[
R=\frac{\sqrt{2}\sin(\alpha/2)}{\sqrt{\cos\alpha}}=
\frac{\sqrt{1-\cos\alpha}}{\sqrt{\cos\alpha}}.
\]
This implies that \[ \abs c = \frac{1}{R} = \frac{\sqrt{\cos \alpha }}{\sqrt{1-\cos \alpha}} \quad \text{ and } \quad \abs a = d\abs{c}=\frac{1}{\sqrt{1-\cos \alpha}}. \]

The isometric circle $I(T_k)$ is mapped by $T_k$ to the isometric circle of $T_k^{-1}=T_{\sigma(k)}$ with center located at $a/c$. We analyze two cases:
\begin{itemize}
\item If $k$ is odd and $k<4g$, then $T_k^{-1}=T_{4g-k}$. The oriented angle $\angle O_kOO_{4g-k}=(4g-2k)\alpha=\pi+(2-2k)\alpha$, so $a/c=e^{\i(\pi+(2-2k)\alpha)}(-\overline a /c)$, which implies that $\arg(a)=\pi+(1-k)\alpha$. From $\arg(-\overline a/c)=-\frac{\pi}{2}+(k-1)\alpha$, we get $\arg(c) = \pi/2$.~Thus,
\[ a=\dfrac{-e^{\i(1-k)\alpha}}{\sqrt{1-\cos\alpha}} \qquad\text{and}\qquad c=\dfrac{\i\sqrt{\cos(\alpha)}}{\sqrt{1-\cos\alpha}}, \]
and, after simplifying the common term $-\sqrt{1-\cos \alpha}$, we get relation~\eqref{T}.
\item  If $k$ is odd and $k>4g$, then $\sigma(k)=4g-k\: (\text{mod } 8g-4)=12g-k-4$, so $T_k^{-1}=T_{12g-k-4}$. The oriented angle $\angle O_kOO_{12g-k-4}=(12g-2k-4)\alpha=3\pi+(2-2k)\alpha$, so $a/c=e^{\i(3\pi+(2-2k)\alpha)}(-\overline a /c)$, which implies that $\arg(a)=2\pi+(1-k)\alpha$. From $\arg(-\overline a/c)=-\frac{\pi}{2}+(k-1)\alpha$, we get $\arg(c) = -\pi/2$. Thus
\[ a=\dfrac{e^{\i(1-k)\alpha}}{\sqrt{1-\cos\alpha}} \qquad\text{and}\qquad c=-\dfrac{\i\sqrt{\cos(\alpha)}}{\sqrt{1-\cos\alpha}}, \]
and, after simplifying the common term $\sqrt{1-\cos \alpha}$, we get relation~\eqref{T}.
\end{itemize}
The case when $k$ is even can be treated similarly.
\end{proof}

\begin{prop}\label{thm Tk shifting} 
For all $x \in \Sb$, $T_k(x+\alpha)=T_{k-1}(x)+(4g-3)\alpha$.
\end{prop}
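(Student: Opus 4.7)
The plan is to verify the identity by direct algebraic substitution using the explicit formula from Proposition~\ref{thm Tk formulas}. Since $T_k$ on the boundary $\Sb$ is just the argument of $T_k$ acting on $\partial\D$, it suffices to show the multiplicative identity
\[ T_k(e^{\i\alpha} z) = e^{\i(4g-3)\alpha}\, T_{k-1}(z) \]
for $z = e^{\i x} \in \partial\D$; taking arguments then gives the claim in $\Sb = \R/2\pi\Z$.

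First I would plug $e^{\i\alpha} z$ into the formula from Proposition~\ref{thm Tk formulas}, giving
\[ T_k(e^{\i\alpha} z) = (-1)^{k+1} \frac{ e^{\i(2-k)\alpha} z + \i \sqrt{\cos\alpha} }{ (-\i\sqrt{\cos\alpha})\, e^{\i\alpha} z + e^{\i(k-1)\alpha} }, \]
so that the numerator coincides with that of $T_{k-1}(z)$ (up to a sign).

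The key numeric input is the identity $e^{\i(4g-2)\alpha} = e^{\i\pi} = -1$, which follows from $\alpha = \frac{2\pi}{8g-4}$; consequently $e^{\i(4g-3)\alpha} = -e^{-\i\alpha}$. Using this, I compute
\[ e^{\i(4g-3)\alpha} T_{k-1}(z) = (-1)^{k+1} \frac{ e^{\i(2-k)\alpha} z + \i \sqrt{\cos\alpha} }{ e^{\i\alpha}(-\i\sqrt{\cos\alpha})\, z + e^{\i\alpha} \cdot e^{\i(k-2)\alpha} }, \]
where the $-e^{-\i\alpha}$ factor has been absorbed: the minus sign flips $(-1)^k$ to $(-1)^{k+1}$, and the $e^{-\i\alpha}$ in front of the fraction has been moved into the denominator. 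The resulting denominator equals $(-\i\sqrt{\cos\alpha})e^{\i\alpha} z + e^{\i(k-1)\alpha}$, matching the first display exactly. Taking arguments on both sides proves the proposition.

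There is no real obstacle here beyond careful bookkeeping of signs and exponents; the only substantive ingredient is the relation $e^{\i(4g-2)\alpha} = -1$ that converts the rotation by $\alpha$ on the domain into a rotation by $(4g-3)\alpha$ on the range while simultaneously shifting the index from $k$ to $k-1$. The case of even versus odd $k$ requires no separate treatment because the formula~\eqref{T} already encodes both in a uniform way via the $(-1)^{k+1}$ factor.
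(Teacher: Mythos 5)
Your proof is correct and follows essentially the same route as the paper: both verify the multiplicative identity $T_k(e^{\i\alpha}z)=e^{\i(4g-3)\alpha}T_{k-1}(z)$ by direct substitution into the formula~\eqref{T} and the key relation $e^{\i(4g-3)\alpha}=-e^{-\i\alpha}$. No gaps; the bookkeeping of the $(-1)^{k+1}$ sign and the exponents matches the paper's computation.
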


\begin{proof}
Let $\beta = (4g-3)\alpha$. Then in complex (multiplicative) notation, the claim is \[ T_k(e^{\i\alpha}z) = e^{\i\beta} T_{k-1}(z). \]
Note that 
$ 
	e^{\i\beta} 
	= e^{\i(\frac{4g-3}{8g-4})2\pi}
	= -e^{-\i\alpha}
$. 
Then, using~\eqref{T},
\begin{align*}
	T_k(e^{\i\alpha}z)
	&= (-1)^{k+1} \frac{(e^{\i(1-k)\alpha})(e^{\i\alpha}z) + \i \sqrt{\cos\alpha}}{(-\i\sqrt{\cos\alpha})(e^{\i\alpha}z) + e^{\i(k-1)\alpha}} \cdot \frac{-e^{\i\beta}}{e^{-\i\alpha}} \\
	&= (-1)^{k+1} (-e^{\i\beta}) \frac{(e^{\i(1-k)\alpha} e^{\i\alpha})z + \i \sqrt{\cos\alpha}}{(-\i\sqrt{\cos\alpha}) + (e^{\i(k-1)\alpha} e^{-\i\alpha})} \\
	&= (-1)^k (e^{\i\beta}) \frac{(e^{\i(-k)\alpha})z + \i \sqrt{\cos\alpha}}{(-\i\sqrt{\cos\alpha})z + e^{\i(k-2)\alpha}} \\
	&= e^{\i\beta} \cdot T_{k-1}(z). \qedhere
\end{align*}
\end{proof}

A function $r(x)$ is said to be \emph{centrally symmetric around~$c$} if $r(c + x) + r(c - x)$ is constant for all $x\in\Sb$ (this constant will be $2r(c)$).
This property is equivalent to saying that the graph of a lift of $r$ to $\R$ restricted to any rectangle $[c-\delta,c+\delta] \times [r(c-\delta),r(c+\delta)]$ is symmetric under rotation by $\pi$ around the center of that rectangle. If the circle is modeled as $\partial\D \subset \C$, then the analogous property is that $r(c\,z)\cdot r(c/z)$ is constant for all $z \in \partial\D$. \medskip

Denote by $C_k$ the midpoint of the segment $[P_k,Q_{k+1}]$. The next \namecref{thm Tk symmetry} asserts that the graph of $T_k$ is centrally symmetric around $C_k$.

\begin{prop} \label{thm Tk symmetry} For all $x \in \Sb$, $T_k(C_k + x) + T_k(C_k - x) = -2 C_k$.
\end{prop}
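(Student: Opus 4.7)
My plan is to convert the statement to multiplicative (complex) form and verify it by direct substitution into formula~\eqref{T}. Writing $C_k$ also for the complex boundary point $e^{\i c_k}\in\partial\D$, the additive identity $T_k(c_k+x)+T_k(c_k-x)=-2c_k$ becomes, after exponentiation,
\[ T_k(C_k\,z)\cdot T_k(C_k/z) = C_k^{-2}\qquad\text{for every } z\in\partial\D, \]
using that $1/z=\bar z$ on the unit circle.

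The first step is to pin down $C_k$. Its endpoints $P_k$ and $Q_{k+1}$ lie on the isometric circle $I(T_k)$, whose center $O_k$, from the proof of Proposition~\ref{thm Tk formulas}, has argument $-\pi/2+(k-1)\alpha$. Since $P_k$ and $Q_{k+1}$ are symmetric across the line through the origin and $O_k$, the midpoint of the short arc of $\partial\D$ between them also lies on this line, giving $C_k = -\i\,e^{\i(k-1)\alpha}$.

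The crucial algebraic simplification is $e^{\i(1-k)\alpha}\cdot C_k = -\i$, which collapses the numerator of $T_k(C_k z)$ from~\eqref{T} to $\i(\sqrt{\cos\alpha}-z)$; factoring $e^{\i(k-1)\alpha}$ out of the denominator then yields
\[ T_k(C_k\,z) = \frac{(-1)^{k+1}\,\i\,(\sqrt{\cos\alpha}-z)}{e^{\i(k-1)\alpha}\,(1-\sqrt{\cos\alpha}\,z)}. \]
Replacing $z$ by $1/z$ and clearing by multiplying top and bottom by $z$ simply swaps the two linear factors between numerator and denominator, so
\[ T_k(C_k/z) = \frac{(-1)^{k+1}\,\i\,(1-\sqrt{\cos\alpha}\,z)}{e^{\i(k-1)\alpha}\,(\sqrt{\cos\alpha}-z)}. \]

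Multiplying these two expressions, the factors $(\sqrt{\cos\alpha}-z)$ and $(1-\sqrt{\cos\alpha}\,z)$ cross-cancel, leaving $\i^{2}/e^{2\i(k-1)\alpha} = -e^{-2\i(k-1)\alpha} = C_k^{-2}$, which is exactly the identity we want. There is no real obstacle beyond correctly locating $C_k$; the remainder is a short algebraic check, applicable uniformly in $k$ through the single formula~\eqref{T}.
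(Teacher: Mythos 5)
Your proof is correct and takes essentially the same route as the paper: rewrite the claim multiplicatively as $T_k(C_k\,z)\cdot T_k(C_k/z)=C_k^{-2}$, identify $C_k=-\i\,e^{\i(k-1)\alpha}$, and verify by direct substitution into~\eqref{T} with the cross-cancellation of the factors $\sqrt{\cos\alpha}-z$ and $1-\sqrt{\cos\alpha}\,z$. The only (minor) difference is how $C_k$ is located: you use the symmetry of $P_k,Q_{k+1}$ about the line through $O$ and $O_k$ (tacitly, but correctly, choosing the intersection point on the $O_k$ side for the short arc), whereas the paper computes the product $P_k\cdot Q_{k+1}$ explicitly from the isometric-circle equation.
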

\begin{proof}
In complex (multiplicative) notation, the claim is that $T_k(C_k\,z) \cdot T_k(C_k/z) = 1/C_k^2$ for all $z \in \partial\D$. Since  $C_k \in \partial\D$ is the midpoint of the counter-clockwise arc of the circle from $P_k$ to $Q_{k+1}$, it satisfies $C_k^2 = P_k\cdot Q_{k+1}$ as complex numbers.
The isometric circle of $T_k(z)$ connects $P_k$ to $Q_{k+1}$ and consists of those $z \in \overline\D$ for which $\abs{T_k'(z)}=1$, so  $P_k, Q_{k+1} \in \partial\D$ are the complex numbers~$z$, $\abs z=1$, satisfying
\[ \abs{ \frac{-\i\sqrt{\cos\alpha}}{\sqrt{1-\cos\alpha}}\, z + \frac{e^{\i(k-1)\alpha}}{\sqrt{1-\cos\alpha}} } = 1. \]
The solutions to this equation are $-e^{\i(k-1)\alpha} w$ and $e^{\i(k-1)\alpha} \overline w$, where $w = \sqrt{1-\cos\alpha} + \i\sqrt{\cos\alpha}$. 
The product of the two solutions is
\begin{align*}
    P_k \cdot Q_{k+1}
    &= e^{\i(k-1)\alpha} w \cdot e^{\i(k-1)\alpha} (-\overline w) 
    = (e^{\i(k-1)\alpha})^2 (-\abs{w}^2) \\&
    = e^{-\i\pi} e^{\i(2k-2)\alpha}
    = e^{\i((2k-2)\alpha-(4g-2)\alpha)} 
	= (e^{\i\alpha})^{2k-4g}.
\end{align*}
Since $C_k$ is the midpoint of the smaller of the two arcs comprising $\partial\D \!\setminus\! \{P_k,Q_{k+1}\}$, we have that \< \label{Ck} C_k = (e^{\i\alpha})^{k-2g}. \>
To prove \Cref{thm Tk symmetry}, we use the alternative form
\[ C_k = e^{\i(k-2g)\alpha} = e^{-\i(2g-1)\alpha} e^{\i(k-1)\alpha} = e^{-\i(\pi/2)} e^{\i(k-1)\alpha} = -\i  e^{\i(k-1)\alpha} \]
to compute
\begin{align*}
    T_k(C_k\,z)
    &= (-1)^{k+1} \frac{e^{\i(1-k)\alpha}(-\i  e^{\i(k-1)\alpha}z) + \i \sqrt{\cos\alpha}}{(-\i \sqrt{\cos\alpha})(-\i  e^{\i(k-1)\alpha}z) + e^{\i(k-1)\alpha}} \\*
    &= \frac{(-1)^{k+1} \cdot \i}{ e^{\i(k-1)\alpha} } \cdot \frac{-z + \sqrt{\cos\alpha}}{(-\sqrt{\cos\alpha})z + 1} 
    = \frac{(-1)^k}{C_k} \cdot \frac{z - \sqrt{\cos\alpha}}{(-\sqrt{\cos\alpha})z + 1} 
\end{align*}
and then
\begin{align*}
    T_k(C_k z) \cdot T_k(C_k/z) 
    &= \!\left(\! \frac{(-1)^k}{C_k} \frac{z - \sqrt{\cos\alpha}}{(-\sqrt{\cos\alpha})z + 1} \right)\!\!\left(\! \frac{(-1)^k}{C_k} \frac{z^{-1} - \sqrt{\cos\alpha}}{(-\sqrt{\cos\alpha})z^{-1} + 1} \right)\! 
    = \frac1{C_k^2}
\end{align*}
as claimed.
\end{proof}

\begin{cor} \label{thm Tk oddness}
    For all $x \in \Sb$, $T_k(-x) = -T_{4g-k}(x)$.
\end{cor}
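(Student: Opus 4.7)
The plan is to combine \Cref{thm Tk symmetry} (central symmetry of $T_k$ around $C_k$) with \Cref{thm Tk shifting} iterated to the right offset. First I apply the symmetry relation at input $C_k+x$, using $C_k+(C_k+x) = 2C_k+x$ and $C_k-(C_k+x)=-x$, to obtain
\[ T_k(-x) = -2C_k - T_k(2C_k + x). \]

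Next, recall from equation~\eqref{Ck} that additively $C_k = (k-2g)\alpha$, so $2C_k + x = x + 2(k-2g)\alpha$. A straightforward induction on \Cref{thm Tk shifting} gives the general rule
\[ T_k(x + m\alpha) = T_{k-m}(x) + m(4g-3)\alpha \quad \text{for every } m \in \Z, \]
where the subscript is interpreted mod $8g-4$ (the rule is invertible on $\Sb$, so negative $m$ is fine). Applying this with $m = 2(k-2g)$, the subscript becomes $k - 2(k-2g) = 4g-k$, and
\[ T_k(2C_k + x) = T_{4g-k}(x) + 2(k-2g)(4g-3)\alpha. \]
Substituting back yields
\[ T_k(-x) = -T_{4g-k}(x) - 2(k-2g)\alpha - 2(k-2g)(4g-3)\alpha = -T_{4g-k}(x) - 2(k-2g)(4g-2)\alpha. \]

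The final ingredient is the arithmetic identity $(4g-2)\alpha = (4g-2)\cdot\frac{2\pi}{8g-4} = \pi$, so the extra term is $2(k-2g)\pi$, an integer multiple of $2\pi$, and vanishes in $\Sb = \R/2\pi\Z$. This gives $T_k(-x) = -T_{4g-k}(x)$ as required. The argument is essentially a short calculation once the two preceding propositions are in hand; the only obstacle is bookkeeping, namely checking that the shifting rule iterates cleanly for any $m \in \Z$ and that the subscript $4g-k$ is consistently interpreted cyclically mod $8g-4$. No genuine difficulty is anticipated.
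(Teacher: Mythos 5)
Your argument is correct and is essentially the paper's proof: both combine the iterated shifting relation of \Cref{thm Tk shifting} with the central symmetry of \Cref{thm Tk symmetry}. The only cosmetic difference is that you apply the symmetry at $C_k$ first and then shift the index by $2(k-2g)$, absorbing the leftover $2(k-2g)(4g-2)\alpha=(k-2g)\cdot 2\pi$ modulo $2\pi$, whereas the paper first shifts to the index $2g$ (where $C_{2g}=0$, so $T_{2g}$ is odd) and then shifts back, with the constants cancelling exactly.
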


\begin{proof}
Applying \Cref{thm Tk shifting} repeatedly gives
\[ T_k(x) = T_{k+n}(x+n\alpha) - n\beta \]
for any $n \in \Z$, where $\beta = (4g-3)\alpha$. Using $n = 2g-k$ we have
\begin{align*}
    T_k(-x) &= T_{2g}\big({-}x + (2g-k)\alpha\big) - (2g-k)\beta \\*
    &= -T_{2g}\big(x - (2g-k)\alpha\big) - (2g-k)\beta \quad\text{by \Cref{thm Tk symmetry}} \\*
    &= -T_{2g-(k-2g)}(x) + (2g-k)\beta - (2g-k)\beta 
    = -T_{4g-k}(x).\qedhere
\end{align*}
\end{proof}

\section{Markov matrices for extremal parameters}\label{sec Markov}

\begin{defn} A parameter choice
$\bar A = \{A_1,\dots,A_{8g-4}\}$ with $A_k \in [P_k,Q_k]$ is called \emph{extremal} if for each $k$ either $A_k = P_k$ or $A_k = Q_k$.
\end{defn}
Extremal parameters were first introduced in~\cite{A20}, in which several results of~\cite{KU17,KU17e,AK19} for parameters with ``short cycles'' were extended to extremal parameters. Note that the classical cases $\A = \P$ and $\A = \Q$ are examples of extremal parameter choices.

Since for all $k=1,\dots,8g-4$, $f_\A(P_k)$ and $f_\A(Q_k)$ belong to the set $\bar P\cup\bar Q$ (see \cite[Proposition~2.2]{KU17}, originally \cite[Theorem~3.4]{AF91}), the partition of $\Sb$ into intervals $I_1,\dots,I_{16g-8}$ given by
\[ I_{2k-1} := [P_k, Q_k], \qquad I_{2k} := [Q_{k}, P_{k+1}], \qquad k=1,\dots,8g-4, \]
is a Markov partition for $f_\A$ for every extremal $\A$. Each extremal $\A$ has a transition matrix $M_\A = (m_{i,j})$ with
\[ m_{i,j} := \left\{\begin{array}{ll} 1 &\text{if } f_\A(I_i) \supset I_j \\ 0 &\text{otherwise,} \end{array}\right. \]
and an infinite sequence $(\omega_0,\omega_1,\dots)$ or finite sequence $(\omega_0,\omega_1,\dots,\omega_n)$ over the alphabet $\{1,\dots,16g-8\}$ is called \emph{$\A$-admissible} if all $m_{\omega_i,\omega_{i+1}}=1$ for all $i \ge 0$ (and $i < n$ for finite).

For each extremal parameter $\A$, we can define the shift space\footnote{In~\cite{AK19} the notation $X_\A$ is used for a space of sofic sequences in $8g-4$ symbols. Here we use it for a Markov shift on $16g-8$ symbols.} \[ X_\A  = \overline{ \big\{\, \omega \in \{1,...,16g-8\}^\N : \text{$\omega$ is admissible} \,\big\} } \]
on which we have the left-shift $\sigma_\A : X_\A \to X_\A$ and the essentially bijective \emph{coding map} $\phi_\A: X_\A \to \Sb$ given by \< \label{coding map} \phi_\A(\omega) = \bigcap_{i=0}^\infty f_\A^{-i}(I_{\omega_i}) \>
so that the following diagram is commutative:
\[ \begin{tikzcd}
  X_\A \arrow{r}{\sigma_\A} \arrow[swap]{d}{\phi_\A}
  & X_\A \arrow{d}{\phi_\A} \\
  \Sb \arrow{r}{f_\A}
  & || \Sb 
\end{tikzcd}. \]
In the case where $f_\A$ is Markov, the system $(X_\A, \sigma_\A)$ is a topological Markov chain.

\medskip
The following formulas use \cite[Proposition~3.1 and Lemma~3.2]{AK19}.
For odd indices $2k-1$, depending on whether $A_k = P_k$ or $A_k = Q_k$ we have, respectively, either
\< \label{Markov odd P}
	f_\A(I_{2k-1}) 
    = T_k(I_{2k-1}) 
    = [Q_{\sigma(k)+1},Q_{\sigma(k)+2}]
    = I_{2\sigma(k)+2} \cup I_{2\sigma(k)+3}
\>
or 
\< \label{Markov odd Q}
    \begin{split}
	f_\A(I_{2k-1}) 
	= T_{k-1}(I_{2k-1})
	&= [P_{\sigma(k-1)-1},P_{\sigma(k-1)}]
	= [P_{\sigma(k)+4g-2},P_{\sigma(k)+4g-1}] \\
    &= I_{2\sigma(k)+8g-5} \cup I_{2\sigma(k)+8g-4}.
\end{split}
\>
In either case, $f_\A(I_{2k-1})$ is the union of two consecutive Markov partition elements.
For even indices $2k$, we know that for any extremal $\A$ the image
\< \label{Markov even}
    \begin{split}
	f_\A(I_{2k}) 
	&= T_k(I_{2k})
	= [Q_{\sigma(k)+2}, P_{\sigma(k)-1}] \\ &
    = I_{2\sigma(k)+4} \cup I_{2\sigma(k)+5} \cup \cdots \cup I_{2\sigma(k)-4}
    = \Sb \setminus \bigcup_{\ell=2\sigma(k)-3}^{2\sigma(k)+3} I_\ell
\end{split}
\>
is the union of $(16g-8)-7 = 16g-15$ consecutive intervals on the circle. Recall that the indices are mod $16g-8$; for example, with $g=2$ and $k=1$ we get
\[ f_\A(I_{2}) = I_{18} \cup I_{19} \cup \cdots \cup I_{10} = I_{18} \cup I_{19} \cup \cdots \cup I_{24} \cup I_{1} \cup \cdots \cup I_{10}. \]
The matrices $M_\P$ and $M_\Q$ for genus 2 are shown in \Cref{fig matrices}.

\begin{figure}[htb]
\providecommand\0{{\color{black}0}}
\providecommand\1{{\color{black}\!\bf1\!}}

{\setlength\arraycolsep{1.8pt}
\tiny$\left(\begin{array}{cccccccccccccccccccccccc}
\0 & \0 & \0 & \0 & \0 & \0 & \0 & \0 & \0 & \0 & \0 & \0 & \0 & \0 & \0 & \1 & \1 & \0 & \0 & \0 & \0 & \0 & \0 & \0 \\
\1 & \1 & \1 & \1 & \1 & \1 & \1 & \1 & \1 & \1 & \0 & \0 & \0 & \0 & \0 & \0 & \0 & \1 & \1 & \1 & \1 & \1 & \1 & \1 \\
\0 & \1 & \1 & \0 & \0 & \0 & \0 & \0 & \0 & \0 & \0 & \0 & \0 & \0 & \0 & \0 & \0 & \0 & \0 & \0 & \0 & \0 & \0 & \0 \\
\0 & \0 & \0 & \1 & \1 & \1 & \1 & \1 & \1 & \1 & \1 & \1 & \1 & \1 & \1 & \1 & \1 & \1 & \1 & \1 & \0 & \0 & \0 & \0 \\
\0 & \0 & \0 & \0 & \0 & \0 & \0 & \0 & \0 & \0 & \0 & \1 & \1 & \0 & \0 & \0 & \0 & \0 & \0 & \0 & \0 & \0 & \0 & \0 \\
\1 & \1 & \1 & \1 & \1 & \1 & \0 & \0 & \0 & \0 & \0 & \0 & \0 & \1 & \1 & \1 & \1 & \1 & \1 & \1 & \1 & \1 & \1 & \1 \\
\0 & \0 & \0 & \0 & \0 & \0 & \0 & \0 & \0 & \0 & \0 & \0 & \0 & \0 & \0 & \0 & \0 & \0 & \0 & \0 & \0 & \1 & \1 & \0 \\
\1 & \1 & \1 & \1 & \1 & \1 & \1 & \1 & \1 & \1 & \1 & \1 & \1 & \1 & \1 & \1 & \0 & \0 & \0 & \0 & \0 & \0 & \0 & \1 \\
\0 & \0 & \0 & \0 & \0 & \0 & \0 & \1 & \1 & \0 & \0 & \0 & \0 & \0 & \0 & \0 & \0 & \0 & \0 & \0 & \0 & \0 & \0 & \0 \\
\1 & \1 & \0 & \0 & \0 & \0 & \0 & \0 & \0 & \1 & \1 & \1 & \1 & \1 & \1 & \1 & \1 & \1 & \1 & \1 & \1 & \1 & \1 & \1 \\
\0 & \0 & \0 & \0 & \0 & \0 & \0 & \0 & \0 & \0 & \0 & \0 & \0 & \0 & \0 & \0 & \0 & \1 & \1 & \0 & \0 & \0 & \0 & \0 \\
\1 & \1 & \1 & \1 & \1 & \1 & \1 & \1 & \1 & \1 & \1 & \1 & \0 & \0 & \0 & \0 & \0 & \0 & \0 & \1 & \1 & \1 & \1 & \1 \\
\0 & \0 & \0 & \1 & \1 & \0 & \0 & \0 & \0 & \0 & \0 & \0 & \0 & \0 & \0 & \0 & \0 & \0 & \0 & \0 & \0 & \0 & \0 & \0 \\
\0 & \0 & \0 & \0 & \0 & \1 & \1 & \1 & \1 & \1 & \1 & \1 & \1 & \1 & \1 & \1 & \1 & \1 & \1 & \1 & \1 & \1 & \0 & \0 \\
\0 & \0 & \0 & \0 & \0 & \0 & \0 & \0 & \0 & \0 & \0 & \0 & \0 & \1 & \1 & \0 & \0 & \0 & \0 & \0 & \0 & \0 & \0 & \0 \\
\1 & \1 & \1 & \1 & \1 & \1 & \1 & \1 & \0 & \0 & \0 & \0 & \0 & \0 & \0 & \1 & \1 & \1 & \1 & \1 & \1 & \1 & \1 & \1 \\
\1 & \0 & \0 & \0 & \0 & \0 & \0 & \0 & \0 & \0 & \0 & \0 & \0 & \0 & \0 & \0 & \0 & \0 & \0 & \0 & \0 & \0 & \0 & \1 \\
\0 & \1 & \1 & \1 & \1 & \1 & \1 & \1 & \1 & \1 & \1 & \1 & \1 & \1 & \1 & \1 & \1 & \1 & \0 & \0 & \0 & \0 & \0 & \0 \\
\0 & \0 & \0 & \0 & \0 & \0 & \0 & \0 & \0 & \1 & \1 & \0 & \0 & \0 & \0 & \0 & \0 & \0 & \0 & \0 & \0 & \0 & \0 & \0 \\
\1 & \1 & \1 & \1 & \0 & \0 & \0 & \0 & \0 & \0 & \0 & \1 & \1 & \1 & \1 & \1 & \1 & \1 & \1 & \1 & \1 & \1 & \1 & \1 \\
\0 & \0 & \0 & \0 & \0 & \0 & \0 & \0 & \0 & \0 & \0 & \0 & \0 & \0 & \0 & \0 & \0 & \0 & \0 & \1 & \1 & \0 & \0 & \0 \\
\1 & \1 & \1 & \1 & \1 & \1 & \1 & \1 & \1 & \1 & \1 & \1 & \1 & \1 & \0 & \0 & \0 & \0 & \0 & \0 & \0 & \1 & \1 & \1 \\
\0 & \0 & \0 & \0 & \0 & \1 & \1 & \0 & \0 & \0 & \0 & \0 & \0 & \0 & \0 & \0 & \0 & \0 & \0 & \0 & \0 & \0 & \0 & \0 \\
\0 & \0 & \0 & \0 & \0 & \0 & \0 & \1 & \1 & \1 & \1 & \1 & \1 & \1 & \1 & \1 & \1 & \1 & \1 & \1 & \1 & \1 & \1 & \1
\end{array}\right)\quad
\left(\begin{array}{cccccccccccccccccccccccc}
\1 & \1 & \0 & \0 & \0 & \0 & \0 & \0 & \0 & \0 & \0 & \0 & \0 & \0 & \0 & \0 & \0 & \0 & \0 & \0 & \0 & \0 & \0 & \0 \\
\1 & \1 & \1 & \1 & \1 & \1 & \1 & \1 & \1 & \1 & \0 & \0 & \0 & \0 & \0 & \0 & \0 & \1 & \1 & \1 & \1 & \1 & \1 & \1 \\
\0 & \0 & \0 & \0 & \0 & \0 & \0 & \0 & \0 & \0 & \1 & \1 & \0 & \0 & \0 & \0 & \0 & \0 & \0 & \0 & \0 & \0 & \0 & \0 \\
\0 & \0 & \0 & \1 & \1 & \1 & \1 & \1 & \1 & \1 & \1 & \1 & \1 & \1 & \1 & \1 & \1 & \1 & \1 & \1 & \0 & \0 & \0 & \0 \\
\0 & \0 & \0 & \0 & \0 & \0 & \0 & \0 & \0 & \0 & \0 & \0 & \0 & \0 & \0 & \0 & \0 & \0 & \0 & \0 & \1 & \1 & \0 & \0 \\
\1 & \1 & \1 & \1 & \1 & \1 & \0 & \0 & \0 & \0 & \0 & \0 & \0 & \1 & \1 & \1 & \1 & \1 & \1 & \1 & \1 & \1 & \1 & \1 \\
\0 & \0 & \0 & \0 & \0 & \0 & \1 & \1 & \0 & \0 & \0 & \0 & \0 & \0 & \0 & \0 & \0 & \0 & \0 & \0 & \0 & \0 & \0 & \0 \\
\1 & \1 & \1 & \1 & \1 & \1 & \1 & \1 & \1 & \1 & \1 & \1 & \1 & \1 & \1 & \1 & \0 & \0 & \0 & \0 & \0 & \0 & \0 & \1 \\
\0 & \0 & \0 & \0 & \0 & \0 & \0 & \0 & \0 & \0 & \0 & \0 & \0 & \0 & \0 & \0 & \1 & \1 & \0 & \0 & \0 & \0 & \0 & \0 \\
\1 & \1 & \0 & \0 & \0 & \0 & \0 & \0 & \0 & \1 & \1 & \1 & \1 & \1 & \1 & \1 & \1 & \1 & \1 & \1 & \1 & \1 & \1 & \1 \\
\0 & \0 & \1 & \1 & \0 & \0 & \0 & \0 & \0 & \0 & \0 & \0 & \0 & \0 & \0 & \0 & \0 & \0 & \0 & \0 & \0 & \0 & \0 & \0 \\
\1 & \1 & \1 & \1 & \1 & \1 & \1 & \1 & \1 & \1 & \1 & \1 & \0 & \0 & \0 & \0 & \0 & \0 & \0 & \1 & \1 & \1 & \1 & \1 \\
\0 & \0 & \0 & \0 & \0 & \0 & \0 & \0 & \0 & \0 & \0 & \0 & \1 & \1 & \0 & \0 & \0 & \0 & \0 & \0 & \0 & \0 & \0 & \0 \\
\0 & \0 & \0 & \0 & \0 & \1 & \1 & \1 & \1 & \1 & \1 & \1 & \1 & \1 & \1 & \1 & \1 & \1 & \1 & \1 & \1 & \1 & \0 & \0 \\
\0 & \0 & \0 & \0 & \0 & \0 & \0 & \0 & \0 & \0 & \0 & \0 & \0 & \0 & \0 & \0 & \0 & \0 & \0 & \0 & \0 & \0 & \1 & \1 \\
\1 & \1 & \1 & \1 & \1 & \1 & \1 & \1 & \0 & \0 & \0 & \0 & \0 & \0 & \0 & \1 & \1 & \1 & \1 & \1 & \1 & \1 & \1 & \1 \\
\0 & \0 & \0 & \0 & \0 & \0 & \0 & \0 & \1 & \1 & \0 & \0 & \0 & \0 & \0 & \0 & \0 & \0 & \0 & \0 & \0 & \0 & \0 & \0 \\
\0 & \1 & \1 & \1 & \1 & \1 & \1 & \1 & \1 & \1 & \1 & \1 & \1 & \1 & \1 & \1 & \1 & \1 & \0 & \0 & \0 & \0 & \0 & \0 \\
\0 & \0 & \0 & \0 & \0 & \0 & \0 & \0 & \0 & \0 & \0 & \0 & \0 & \0 & \0 & \0 & \0 & \0 & \1 & \1 & \0 & \0 & \0 & \0 \\
\1 & \1 & \1 & \1 & \0 & \0 & \0 & \0 & \0 & \0 & \0 & \1 & \1 & \1 & \1 & \1 & \1 & \1 & \1 & \1 & \1 & \1 & \1 & \1 \\
\0 & \0 & \0 & \0 & \1 & \1 & \0 & \0 & \0 & \0 & \0 & \0 & \0 & \0 & \0 & \0 & \0 & \0 & \0 & \0 & \0 & \0 & \0 & \0 \\
\1 & \1 & \1 & \1 & \1 & \1 & \1 & \1 & \1 & \1 & \1 & \1 & \1 & \1 & \0 & \0 & \0 & \0 & \0 & \0 & \0 & \1 & \1 & \1 \\
\0 & \0 & \0 & \0 & \0 & \0 & \0 & \0 & \0 & \0 & \0 & \0 & \0 & \0 & \1 & \1 & \0 & \0 & \0 & \0 & \0 & \0 & \0 & \0 \\
\0 & \0 & \0 & \0 & \0 & \0 & \0 & \1 & \1 & \1 & \1 & \1 & \1 & \1 & \1 & \1 & \1 & \1 & \1 & \1 & \1 & \1 & \1 & \1
\end{array}\right)
$}
\caption{Transition matrices $M_\P$ (left) and $M_\Q$ (right) for $g=2$}
\label{fig matrices}
\end{figure}

\begin{prop} \label{thm max eigenvalue} For any extremal $\A$, the maximal eigenvalue of~$M_\A$ is \[ \lambda = 4g-3 + \sqrt{(4g-3)^2-1}. \] \end{prop}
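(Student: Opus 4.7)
My plan is to reduce the eigenvalue problem for the $(16g-8)\times(16g-8)$ matrix $M_\A$ to a $2\times 2$ computation by exploiting a parity-based symmetry that is present regardless of the extremal choice.

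First I would classify the rows of $M_\A$ using \eqref{Markov odd P}, \eqref{Markov odd Q}, and \eqref{Markov even}. Each odd-indexed row $2k-1$ has $1$'s in exactly two consecutive columns, and because any two consecutive indices have opposite parity, these two $1$'s always land in one odd-indexed column and one even-indexed column, irrespective of whether $A_k=P_k$ or $A_k=Q_k$. Each even-indexed row $2k$ has $1$'s in every column except the seven consecutive columns $2\sigma(k)-3,\dots,2\sigma(k)+3$; since $2\sigma(k)$ is even, among these seven excluded indices four are odd and three are even, so every even row contains $(8g-4)-4=8g-8$ ones in odd-indexed columns and $(8g-4)-3=8g-7$ ones in even-indexed columns.

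Motivated by this, I would look for a right eigenvector of the form $v_{2k-1}=a$ and $v_{2k}=b$, constant on each parity class. The equation $M_\A v = \lambda v$ then collapses to the $2\times 2$ system
\begin{align*}
\lambda a &= a + b,\\
\lambda b &= (8g-8)\,a + (8g-7)\,b,
\end{align*}
which is the eigenvalue problem for the quotient matrix $\bigl(\begin{smallmatrix} 1 & 1 \\ 8g-8 & 8g-7\end{smallmatrix}\bigr)$, with trace $8g-6$ and determinant $1$. Its characteristic polynomial $\lambda^2-(8g-6)\lambda+1=0$ has roots $(4g-3)\pm\sqrt{(4g-3)^2-1}$, the larger of which is the claimed value.

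Finally I would verify that this larger root $\lambda_+ = 4g-3+\sqrt{(4g-3)^2-1}$ is the spectral radius of $M_\A$. Taking $a=1$ gives $b = \lambda_+-1 > 0$, so the corresponding vector $v$ is strictly positive. A standard Perron--Frobenius-style argument---for any eigenvalue $\mu$ with eigenvector $w$, compare $M_\A|w|$ with $\lambda_+ v$ at an index where $|w|/v$ is maximized---then yields $|\mu|\le \lambda_+$, so $\lambda_+$ is the maximal eigenvalue. The main obstacle is the combinatorial bookkeeping in step one, especially the parity count of excluded columns; the rest is a routine $2\times 2$ calculation, and the conclusion is manifestly uniform across all extremal parameter choices, consistent with the rigidity statement being proved.
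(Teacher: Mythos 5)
Your proof is correct, and it takes a genuinely different route from the paper's. The paper computes the maximal eigenvalue via Gelfand's formula: the entrywise norm $\abs{M_\A^n}$ counts admissible words of length $n+1$, and the same parity counts you use (each odd row has one $1$ in an odd column and one in an even column; each even row has $8g-8$ and $8g-7$ respectively) yield the linear recurrence $N_{n+1}=(8g-6)N_n-N_{n-1}$, whose growth rate is $4g-3+\sqrt{(4g-3)^2-1}$. You instead build the Perron eigenvector directly: the parity symmetry collapses $M_\A v=\lambda v$ to the $2\times2$ quotient matrix $\bigl(\begin{smallmatrix}1&1\\ 8g-8&8g-7\end{smallmatrix}\bigr)$ with trace $8g-6$ and determinant $1$, and the standard comparison argument (maximize $\abs{w_i}/v_i$ against the strictly positive eigenvector) shows the larger root is the spectral radius; this last step is valid and needs no irreducibility. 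Your column-parity bookkeeping agrees with the facts \eqref{m fact odd} and \eqref{m fact even} that the paper records later, and your eigenvector $(1,\lambda-1,1,\lambda-1,\dots)$ is exactly the normalized eigenvector of the paper's \Cref{thm eigenvector} (the proportionality to $(\lambda-8g+7,\,8g-8,\dots)$ follows from $\lambda^2-(8g-6)\lambda+1=0$), so your approach delivers that later proposition for free. What the paper's route buys instead is a direct dynamical interpretation -- the eigenvalue appears as the exponential growth rate of admissible words, which is conceptually close to the entropy statement being targeted -- and it avoids invoking any Perron--Frobenius-style lemma.
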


\begin{proof}
Gelfand's Formula~\cite{G41} states that $\lim_{n\to\infty} \norm{M_\A^n}^{1/n}$ equals the maximal eigenvalue of $M_\A$, where $\norm{\,\cdot\,}$ is any matrix norm.
The ``entrywise norm'' $\abs{M_A^n}$ given by the sum of (absolute values of) all entries in $M_\A^n$ counts the total number of admissible sequences of length $n+1$, that is,
\[
	\abs{M_\A^n} = \#\{\, (\omega_0,\omega_1,\dots,\omega_n) : m_{\omega_i,\omega_{i+1}} = 1 \text{ for } i=0,\dots,n-1 \,\}.
\]
To compress notation, we will write
\providecommand\Nall[1]{N_{#1}}
\providecommand\Neven[1]{N_{#1}^\mathrm{even}}
\providecommand\Nodd[1]{N_{#1}^\mathrm{odd}}
\[ \begin{split}
	\Nall n &:= \#\{\, (\omega_0,\omega_1,\dots,\omega_n) : \text{admissible} \,\} = \abs{M_\A^n} \\
	\Neven n &:= \#\{\, (\omega_0,\omega_1,\dots,\omega_n) :\text{admissible and } \omega_n\text{ is even} \,\} \\
	\Nodd n &:= \#\{\, (\omega_0,\omega_1,\dots,\omega_n) : \text{admissible and } \omega_n\text{ is odd} \,\}.
\end{split} \]
 Thus
\< \Nall{n+1} = 2 \Nodd{n} + (16g-15) \Neven{n}. \label{N n+1 decomp} \>

Since the indices of intervals that make up $f_\A(I_i)$ are consecutive, $f_\A(I_{2k-1})$ is the union of one even-index and one odd-index Markov interval, and $f_\A(I_{2k})$ is the union of $8g-8$ intervals with odd indices and $8g-7$ intervals with even indices. In terms of counting sequences,
\< \label{N more decomps} \begin{split}
	\Nodd{n} &= \Nodd{n-1} + (8g-8) \Neven{n-1} \\
	\Neven{n} &= \Nodd{n-1} + (8g-7) \Neven{n-1}.
\end{split} \>

Using~\eqref{N more decomps} and the fact that $\Nodd{n-1} + \Neven{n-1} = \Nall{n-1}$, we will convert~\eqref{N n+1 decomp} into a recurrence relation for $\Nall n$.
\begin{align*}
	\Nall{n+1} &= 2 \Nodd{n} + (16g-15) \Neven{n} \\*
	&= \big( (8g\!-\!6) - (8g\!-\!8) \big) \Nodd{n} + \big( (8g\!-\!6) + (8g\!-\!9) \big) \Neven{n} \\
	&= (8g\!-\!6) \Nall{n} - (8g\!-\!8) \Nodd{n} + (8g\!-\!9) \Neven{n} \\*
	&= (8g\!-\!6) \Nall{n} - (8g\!-\!8) \big( \Nodd{n-1} + (8g\!-\!8) \Neven{n-1} \big) \qquad\smash{\raisebox{-1em}{by~\eqref{N more decomps}}} \\*&\qquad + (8g\!-\!9) \big( \Nodd{n-1} + (8g\!-\!7) \Neven{n-1} \big) \\
	&= (8g\!-\!6) \Nall{n} + (-1) \Nodd{n-1} + (-1) \Neven{n-1} \\*
	&= (8g\!-\!6) \Nall{n} - \Nall{n-1}.
\end{align*}
Any nonzero sequence $(N_0,N_1,N_2,\dots)$ satisfying the linear recurrence relation
\[ \Nall{n+1} = K \Nall{n} - \Nall{n-1} \]
has an explicit expression of the form
\[ \Nall{n} = {c}_1 \cdot \Big(\!K/2 + \!\sqrt{(K/2)^2-1}\Big)^{\!n} + {c}_2 \cdot \Big(\!K/2 + \!\sqrt{(K/2)^2-1}\Big)^{\!-n} \]
for some constants ${c}_1$ and ${c}_2$, and therefore $\lim_{n\to\infty} (\Nall n)^{1/n} = K/2 + \!\sqrt{(K/2)^2-1}$.
For $\Nall n = \abs{M_\A^n}$ we have exactly this relation with $K=8g-6$; therefore the maximal eigenvalue of $M_\A$ is $\lim_{n\to\infty} \abs{M_\A^n}^{1/n} = 4g-3 + \sqrt{(4g-3)^2-1}$.
\end{proof}

\begin{cor}\label{thm entropy for extremal}
    For any extremal $\bar A$, $h_\text{top}(f_\A)=\log\lambda$.
\end{cor}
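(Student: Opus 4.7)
The plan is to invoke the classical identification of topological entropy with the logarithm of the spectral radius of the transition matrix for a Markov map --- this is Walters' \cite[Theorem 7.13]{W}, already cited in the introduction as the intended tool for the Markov cases. For extremal $\A$, the partition $\{I_1,\dots,I_{16g-8}\}$ is Markov for $f_\A$ by~\eqref{Markov odd P}--\eqref{Markov even}, and each restriction $f_\A|_{I_i}$ is a single monotone M\"obius branch, so this setup applies directly.

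Concretely, I would first describe the coding (factor) map from the one-sided subshift of finite type $(\Sigma_{M_\A},\sigma)$ onto $(\Sb,f_\A)$ sending an $\A$-admissible sequence $(\omega_n)_{n\ge 0}$ to the unique point whose forward $f_\A$-itinerary with respect to this partition is $(\omega_n)$. This map is continuous and is one-to-one off the countable set of preimages of the partition endpoints $\P\cup\Q$, so it is an essentially bijective semiconjugacy, which preserves topological entropy. An equivalent route, in the spirit of \cite{MZ,AM}, would be to use the lap-number characterization $h_\mathrm{top}(f_\A)=\lim_{n\to\infty}\tfrac{1}{n}\log \ell(f_\A^n)$ for piecewise continuous, piecewise monotone maps, observing that the monotonicity intervals of $f_\A^n$ are precisely the nonempty cylinders $I_{\omega_0}\cap f_\A^{-1}(I_{\omega_1})\cap\cdots\cap f_\A^{-(n-1)}(I_{\omega_{n-1}})$ indexed by admissible length-$n$ words, so that $\ell(f_\A^n)=|M_\A^{n-1}|$ in the notation of the previous proposition.

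With either formulation, the exponential growth rate of the relevant counting function equals the spectral radius of $M_\A$, which by \Cref{thm max eigenvalue} equals $\lambda=4g-3+\sqrt{(4g-3)^2-1}$, yielding $h_\text{top}(f_\A)=\log\lambda$. The only point that requires care is verifying that the semiconjugacy (or the lap-number bijection) is well defined despite the discontinuities of $f_\A$ at the points $A_k$; this is immediate because, by extremality, each $A_k\in\{P_k,Q_k\}$ is itself a partition endpoint, so the discontinuity set is contained in the Markov grid and no entropy is lost in the coding. This compatibility between the discontinuity structure and the Markov partition is the one input specific to \emph{extremal} parameters --- for non-extremal $\A$ the same partition fails to be Markov, which is precisely why the rest of the paper develops the constant-slope conjugacy.
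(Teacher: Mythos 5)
Your proposal is correct and takes essentially the same route as the paper, which states this corollary without further argument precisely because, for extremal $\A$, the partition $\{I_i\}$ is Markov and the entropy is then the logarithm of the maximal eigenvalue of $M_\A$ computed in \Cref{thm max eigenvalue} (the fact cited from \cite[Theorem 7.13]{W} in the introduction). One minor imprecision: the identity $\ell(f_\A^n)=\abs{M_\A^{n-1}}$ is not exact, since adjacent cylinders can merge into a single lap of $f_\A^n$ (e.g.\ $I_{2k-1}\cup I_{2k}$ for $\A=\P$, where both lie in one branch of $T_k$), but the two counts differ by at most a factor polynomial in $n$, so the growth-rate conclusion is unaffected.
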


\begin{prop} \label{thm eigenvector} 
    For any extremal $\A$, the right eigenvector $v = (v_1,\dots,v_{16g-8})$, corresponding to the maximal eigenvalue $\lambda$, normalized so that $\sum {v_i} = 1$, is given by
	\[ \label{eigenvector} v_i = \Piecewise{ \vspace{0.8em} \dfrac{1}{\lambda(8g-4)} & \text{if $i$ is odd} \\ \dfrac{\lambda-1}{\lambda(8g-4)} &\text{if $i$ is even.} } \]
\end{prop}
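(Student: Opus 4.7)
The plan is a direct verification that the proposed vector $v$ satisfies $M_{\bar A} v = \lambda v$ together with the normalization condition. No additional structural results beyond the Markov image formulas \eqref{Markov odd P}, \eqref{Markov odd Q}, \eqref{Markov even} and the characteristic equation for $\lambda$ extracted in the proof of \Cref{thm max eigenvalue} are needed, and for this reason the result is uniform across all extremal $\bar A$.

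The key observation is that $(M_{\bar A} v)_i = \sum_{j : I_j \subset f_{\bar A}(I_i)} v_j$, and since $v_j$ depends only on the parity of $j$, I only need to track how many odd-indexed versus even-indexed Markov intervals appear in $f_{\bar A}(I_i)$. This parity count is already implicit in the three cases treated in the proof of \Cref{thm max eigenvalue}. For odd $i = 2k-1$, formulas \eqref{Markov odd P} and \eqref{Markov odd Q} both describe $f_{\bar A}(I_i)$ as a union of two consecutive Markov intervals $I_j \cup I_{j+1}$, which automatically contribute one odd-indexed and one even-indexed $v_j$; so $(M_{\bar A}v)_i = v_{\text{odd}} + v_{\text{even}} = \frac{1 + (\lambda - 1)}{\lambda(8g-4)} = \frac{1}{8g-4}$, which agrees with $\lambda v_i = \lambda \cdot \frac{1}{\lambda(8g-4)}$.

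For even $i = 2k$, formula \eqref{Markov even} shows that $f_{\bar A}(I_i)$ is the union of $16g-15$ consecutive Markov intervals $I_{2\sigma(k)+4}, \dots, I_{2\sigma(k)-4}$, starting and ending at even indices. Hence it contains $8g-7$ even-indexed and $8g-8$ odd-indexed intervals. This gives $(M_{\bar A}v)_{2k} = (8g-8)v_{\text{odd}} + (8g-7)v_{\text{even}}$. Setting this equal to $\lambda v_{2k} = \frac{\lambda - 1}{8g-4}$ and clearing denominators reduces, after a short calculation, to the quadratic $\lambda^2 - (8g-6)\lambda + 1 = 0$. But $\lambda = 4g-3 + \sqrt{(4g-3)^2 - 1}$ is precisely the larger root of $\lambda^2 - 2(4g-3)\lambda + 1 = 0$, so the identity holds. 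Finally, $\sum_i v_i = (8g-4)(v_{\text{odd}} + v_{\text{even}}) = (8g-4) \cdot \frac{1}{8g-4} = 1$, which is the desired normalization.

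There is no real obstacle here: the verification is mechanical once the parity counts are in hand. The only care needed is checking that \eqref{Markov odd P} and \eqref{Markov odd Q} yield the same parity profile (they do, since both produce a pair of consecutive Markov intervals), so the eigenvector formula is insensitive to the specific extremal choice $A_k \in \{P_k, Q_k\}$. The conceptual takeaway is that while $M_{\bar A}$ itself varies with $\bar A$, its action on any vector that is constant on odd indices and constant on even indices is determined solely by these parity counts, which are the same for every extremal $\bar A$.
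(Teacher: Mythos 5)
Your proposal is correct and follows essentially the same route as the paper: both arguments reduce $(M_{\bar A}v)_i$ to the parity counts of Markov intervals in $f_{\bar A}(I_i)$ (one odd and one even for odd $i$; $8g-8$ odd and $8g-7$ even for even $i$) and then invoke the quadratic $\lambda^2-(8g-6)\lambda+1=0$. The only cosmetic difference is that the paper verifies the eigenequation for an unnormalized vector with entries $\lambda-8g+7$ and $8g-8$ and normalizes afterwards, whereas you work with the normalized entries directly.
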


\begin{proof} 
\renewcommand\xi{v'}
From the proof of \Cref{thm max eigenvalue}, for each odd $i$ the set $f_\A(I_i)$ is the union of two consecutive Markov partition elements, one with an even index and one with an odd index, and thus
\< \label{m fact odd} \sum_{k=1}^{8g-4} m_{i,2k-1} = 1 \quad\text{and}\quad \sum_{k=1}^{8g-4} m_{i,2k} = 1 \qquad\text{for odd $i$.} \>
Similarly, if $i$ is even then for any extremal $\A$ we know $f_\A(I_i)$ is the union of $8g-8$ odd indices and $8g-7$ even indices, so
\< \label{m fact even} \sum_{k=1}^{8g-4} m_{i,2k-1} = 8g-8 \quad\text{and}\quad \sum_{k=1}^{8g-4} m_{i,2k} = 8g-7 \qquad\text{for even $i$.} \>
We will show that the vector $\xi$ given by
\[ \label{eigenvector not normalized} \xi_i = \Piecewise{ \lambda-8g+7 & \text{if $i$ is odd} \\ 8g-8 &\text{if $i$ is even} } \]
satisfies $M_\A\xi = \lambda \xi$ by direct calculation.
First, note that $\lambda = 4g-3 + \sqrt{(4g\!-\!3)^2-1}$ is one root of the quadratic equation \< \label{lambda quadratic} \lambda(\lambda-8g+7) = \lambda-1. \>
Then we have
\begin{align*}
    (M_\A\xi)_i &= \sum_{j=1}^{16g-8} m_{i,j} \xi_j 
    = \sum_{k=1}^{8g-4} m_{i,2k-1} \xi_{2k-1} + \sum_{k=1}^{8g-4} m_{i,2k} \xi_{2k} \\
    &= (\lambda-8g+7)\left(\sum_{k=1}^{8g-4} m_{i,2k-1}\right) + (8g-8)\left(\sum_{k=1}^{8g-4} m_{i,2k}\right) \\*
    &= \Piecewise{
    	(\lambda\!-\!8g\!+\!7)(1) + (8g\!-\!8)(1) & \text{if $i$ is odd} \\
    	(\lambda\!-\!8g\!+\!7)(8g\!-\!8) + (8g\!-\!8)(8g\!-\!7) & \text{if $i$ is even}
	} \text{by~\eqref{m fact odd},\,\eqref{m fact even}} \\
	&= \Piecewise{
    	\lambda-1 & \text{if $i$ is odd} \\
    	\lambda(8g-8) & \text{if $i$ is even}
	} \\
	&= \Piecewise{ \lambda(\lambda-8g+7) & \text{if $i$ is odd} \\ \lambda(8g-8) &\text{if $i$ is even} } \text{~by~\eqref{lambda quadratic}} \\*
	&= \lambda \xi_i.
\end{align*}

The normalized eigenvector $v$ is then obtained by dividing $\xi$ by
\[
	\sum_{i=1}^{16g-8} {\xi_i}
	= (8g\!-\!4)(\lambda\!-\!8g\!+\!7) + (8g\!-\!4)(8g\!-\!8)
	= (8g-4)(\lambda-1).
\]
From~\eqref{lambda quadratic}, we have that
$\lambda(8g-4) = (\lambda+1)^2$
and so the coordinates of $v$ are
\[  
    v_i 
    = \frac{\lambda-8g+7}{(8g-4)(\lambda-1)}
    = \frac{(\lambda-1)/\lambda}{(8g-4)(\lambda-1)} 
    = \frac{1}{\lambda(8g-4)} 
\]
for odd $i$ and 
\[  
    v_i 
    = \frac{(8g-7)-1}{(8g-4)(\lambda-1)} 
    = \frac{(\lambda-\frac{\lambda-1}{\lambda})-1}{(8g-4)(\lambda-1)} 
    = \frac{1 - \frac1\lambda}{8g-4} 
    = \frac{\lambda-1}{\lambda(8g-4)}
\]
for even $i$.
\end{proof}

\section{Conjugacy to constant-slope map}\label{sec conjugacy}

We begin by stating a theorem combining several results of~\cite{Parry66,AM}, stated here for circle maps instead of interval maps (as in~\cite{MSz}):
\begin{thm}\label{newthm}
Given a piecewise monotone, piecewise continuous, topologically transitive map $f: \Sb \to \Sb$ of positive topological entropy $h>0$, there exists a unique (up to rotation of~$\Sb$) increasing homeomorphism $\psi:\Sb \to \Sb$ conjugating $f$ to a piecewise continuous map with constant slope $e^{h}$.
\end{thm}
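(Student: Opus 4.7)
The plan is to reduce the circle case to the interval case of \cite{AM}. Since $f$ has finitely many discontinuities and monotone pieces, I would first select a cutting point $c \in \Sb$ — preferably a point of discontinuity of $f$ (if $f$ is everywhere continuous, any point will do). Cutting $\Sb$ at $c$ produces an interval map $\tilde f$ on $[c, c+2\pi]$ which remains piecewise monotone and piecewise continuous. Both topological transitivity and positive topological entropy carry over: transitivity survives generically because one can always find a transitive orbit avoiding the countable set of preimages of $c$, and the Bowen definition of topological entropy via $(n,\varepsilon)$-separated sets is insensitive to the behavior of $f$ on a single point, so $h_\mathrm{top}(\tilde f) = h_\mathrm{top}(f) = h$.

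Next, I would invoke the interval version of \cite{AM}: there exists an increasing homeomorphism $\tilde\psi: [c, c+2\pi] \to [c, c+2\pi]$ conjugating $\tilde f$ to a piecewise continuous map of constant slope $e^h$. In the \cite{AM} construction, $\tilde\psi$ is built as the cumulative distribution function of the (unique) measure of maximal entropy $\nu$, i.e., $\tilde\psi(x) = c + 2\pi \cdot \nu([c,x])$, which by construction sends endpoints to endpoints. Hence $\tilde\psi$ descends to an increasing homeomorphism $\psi : \Sb \to \Sb$ satisfying $\psi \circ f \circ \psi^{-1} = g$, where $g$ is a piecewise continuous circle map of constant slope $e^h$.

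For uniqueness up to rotation, suppose $\psi_1, \psi_2$ are two such conjugacies, so that $g_i := \psi_i \circ f \circ \psi_i^{-1}$ each have constant slope $e^h$. Then $\varphi := \psi_2 \circ \psi_1^{-1}$ is an increasing homeomorphism of $\Sb$ satisfying $\varphi \circ g_1 = g_2 \circ \varphi$. Pulling back the constant density $\tfrac{d\,}{dx}g_2 \equiv e^h$ under $\varphi$ and comparing to $\tfrac{d\,}{dx}g_1 \equiv e^h$ on each monotone piece forces $\varphi'$ to be constant (and equal to~$1$) almost everywhere, so $\varphi$ is an isometry of $\Sb$, i.e., a rotation.

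The principal obstacle I anticipate is the descent step: making sure $\tilde\psi$ genuinely factors through the identification $c \sim c+2\pi$. The clean way to handle this is to bypass the cutting and construct the measure of maximal entropy directly on $\Sb$ — the \cite{AM} proof, based on the asymptotic growth rate of monotone pieces (lap numbers) of $f^n$, is essentially local and adapts to the circle once one fixes a reference point on $\Sb$ from which to integrate. Different choices of reference point then yield precisely the advertised rotation ambiguity in $\psi$. This is the route taken in \cite{MSz} for circle maps and is what I would follow to make the argument rigorous.
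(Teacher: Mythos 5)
The existence half of your plan is essentially what the paper does: its ``proof'' of this theorem is an assembly of citations, namely \cite[Corollary B]{AM} for existence and \cite[Lemma 8.1, Theorem 8.2, Corollary 1]{AM} for uniqueness, with the continuity hypothesis relaxed to piecewise continuity as in \cite{Hofbauer,Denker} and the statement transported to circle maps as in \cite{MSz}. Your cut-at-$c$-and-reglue reduction (or, better, the direct adaptation on $\Sb$ that you yourself recommend in the last paragraph) is an acceptable way to effect that transport, modulo a sentence justifying transitivity of the cut interval map. One caution on the mechanism: the conjugating homeomorphism arising from \cite{AM} is the distribution function of a measure $\nu$ satisfying $\nu(f(E)) = e^{h}\nu(E)$ for $E$ contained in a monotone piece (an eigenmeasure of the transfer operator), and it is this scaling identity, not invariance of a measure, that produces constant slope; for the existence step this does not matter since you invoke the interval theorem as a black box, but it matters for uniqueness.

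The genuine gap is in your uniqueness argument. The map $\varphi=\psi_2\circ\psi_1^{-1}$ is a priori only an increasing homeomorphism: it is differentiable almost everywhere, but it need not be absolutely continuous, so knowledge of $\varphi'$ a.e.\ does not determine $\varphi$. Concretely, from $\varphi\circ g_1=g_2\circ\varphi$ you can only conclude $\varphi'(g_1(x))=\varphi'(x)$ at a.e.\ point; upgrading this to ``$\varphi'$ is constant a.e.'' already requires an ergodicity input, and even granting $\varphi'\equiv 1$ a.e., the map $\varphi(x)=x+s(x)$ with $s$ a nonconstant singular nondecreasing (Cantor-type) function satisfies $\varphi'=1$ a.e.\ without being a rotation. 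Ruling out exactly such singular conjugacies is the whole content of the uniqueness statement, and it cannot be done by comparing slopes pointwise. The correct route is measure-theoretic: any increasing homeomorphism conjugating $f$ to a constant-slope-$e^h$ circle map is the distribution function (up to rotation) of a nonatomic measure $\nu$ with $\nu(f(E))=e^{h}\nu(E)$ on monotone pieces, obtained by pulling back arc length; uniqueness of such a $\nu$ for a transitive, piecewise monotone, piecewise continuous map of positive entropy is precisely what \cite[Lemma 8.1, Theorem 8.2, Corollary 1]{AM} provide (with the piecewise-continuity adaptation via \cite{Hofbauer,Denker}), and it is what the paper cites. Your proposal needs either this citation or an independent proof of that measure rigidity; as written, the uniqueness step does not go through.
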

Existence follows from \cite[Corollary B]{AM}, and uniqueness follows from \cite[Lemma 8.1, Theorem 8.2, Corollary 1]{AM}, where the continuity assumption is replaced by piecewise continuity (as in \cite{Hofbauer,Denker}).

The map $f_\P: \Sb \to \Sb$ is piecewise monotone, piecewise continuous, topologically transitive (see \cite[Lemma 2.5]{BS79}), and with positive topological entropy (see \Cref{thm entropy for extremal}), so by \Cref{newthm} there exists an increasing homeomorphism $\psi_\P : \Sb \to \Sb$ conjugating it to a map \[ \ell_\P := \psi_\P \circ f_\P \circ \psi_\P^{-1} \] with constant slope, see \Cref{fig fP and CS,fig psi}. The map $\psi_\P$ is unique up to rotation of~$\Sb$, and the slope of $\ell_\P$ is exactly $\lambda = e^{h_\mathrm{top}(f_\P)}$. Although the existence of a conjugacy to a constant-slope map holds for $f_\P$ associated to irregular fundamental polygons as well as regular, we will assume that $\Fc$ is regular for the remainder of this section.

\begin{figure}[ht]
    \includegraphics[width=\textwidth]{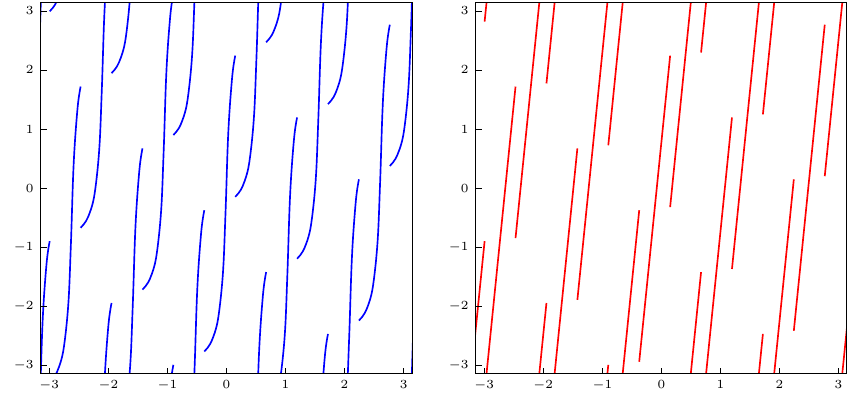}
    \caption{Plots of $f_\P(x)$ (left) and $\ell_\P(x)$ (right) for $g=2$}
    \label{fig fP and CS}
\end{figure}

\begin{figure}[ht]
    \includegraphics[width=\textwidth]{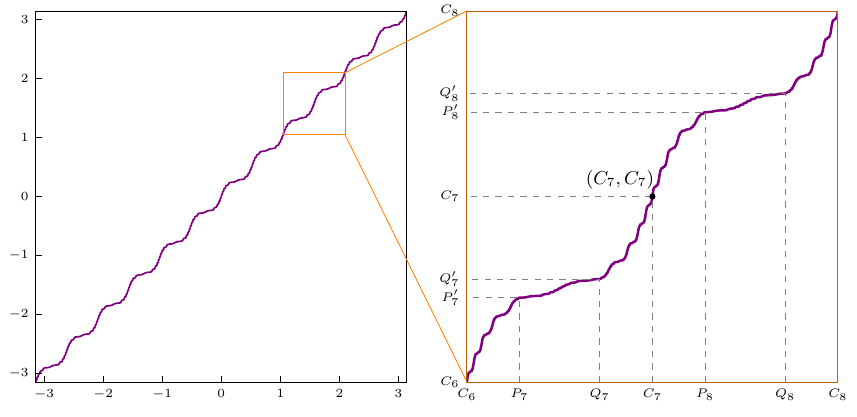}
    \caption{Plot of $\psi_\P(x)$ for $g=2$}
    \label{fig psi}
\end{figure}

The map $f_\Q: \Sb \to \Sb$, just like $f_\P,$ is piecewise monotone, piecewise continuous, topologically transitive, and with positive topological entropy, so by \Cref{newthm} there exists an increasing homeomorphism $\psi_\Q : \Sb \to \Sb$ conjugating it to a map $\ell_\Q$ of constant slope, unique up to rotation of $\Sb$. By \Cref{thm entropy for extremal}, both $\ell_\P$ and $\ell_\Q$ have the same slope.

Because $f_\P$ and $f_\Q$ are Markov maps, the conjugacies $\psi_\P$ and $\psi_\Q$ follow the classical construction due to Parry \cite{Parry64,Parry66} and used in the proof of \cite[Lem\-ma~5.1]{AM}.
For each extremal parameter $\A$, we define the probability measure $\rho_\A$ on $X_\A$ as follows: let $\lambda,v$ be the maximal eigenpair for the transition matrix $M_\A$; for an $\A$-admissible finite sequence $(\omega_0,...,\omega_n)$, we denote the symbolic cylinder 
\[ C_\A^{(\omega_0,\dots,\omega_n)} := \big\{\, \omega' \in X_\A : \omega'_i = \omega_i ~\forall~ 0 \le i \le n \,\big\} \]
and define the measure $\rho_\A$ of this cylinder as
\[
	\rho_\A\big(C_\A^{(\omega_0,\dots,\omega_n)}\big) = \frac{v_{\omega_n}}{\lambda^n}. 
\]
The measure $\rho_\A$ is equivalent to the shift-invariant ``Parry measure'' (the measure of maximal entropy; see \cite{Parry64,Parry66}).
The measure $\rho_\A$ is not shift-invariant but has the ``expanding property''
\[
\rho_\A(\sigma_\A(C))=\lambda\rho_A(C)
\]
for all cylinders $C$ on $(X_\A, \sigma_\A)$.

Using the measure $\rho_\A$, one constructs the push-forward measure $\rho'_\A$ on $\Sb$ given by
\[ \label{mu'} \rho'_\A(E) = \rho_\A\big( \phi^{-1}_\A(E) \big) \qquad\text{for a Borel set $E$}, \]
where $\phi_\A: X_\A \to \Sb$ is the symbolic coding map~\eqref{coding map}.
With the convention that $\psi_\A(0) = 0$, the conjugacy map $\psi_\A:\Sb \to \Sb$ is given by
\< \label{psi formula from Parry} \psi_\A(x) := 2\pi\cdot \Piecewise{
	\rho'_\A\big([0,x]\big) &\text{if }x \ge 0 \\
	-\rho'_\A\big([x,0]\big) &\text{if }x < 0
} \>
(the $2\pi$ appears because of our convention that the circle is $\Sb = [-\pi,\pi]$).

It turns out that the maps $\psi_\P$ and $\psi_\Q$ thus constructed coincide:
\begin{thm}\label{thm psi odd} For all $x\in\Sb$, $\psi_\P(x) = \psi_\Q(x)$. \end{thm}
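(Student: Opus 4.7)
The plan is to exploit a reflection symmetry that swaps $f_\P$ and $f_\Q$ and then appeal to the uniqueness in \Cref{newthm}. First, I would establish the identity $f_\Q(x) = -f_\P(-x)$ on $\Sb$, so that the reflection $r(x) := -x$ satisfies $r \circ f_\P = f_\Q \circ r$. For $x \in I_{2k-1}$ (where $f_\Q(x) = T_{k-1}(x)$), the explicit forms $P_k = -e^{\i(k-1)\alpha}w$ and $Q_{k+1} = e^{\i(k-1)\alpha}\bar w$ from the proof of \Cref{thm Tk symmetry} give, via complex conjugation on $\Sb$, the identities $-P_k = Q_{4g-k+1}$ and $-Q_k = P_{4g-k+1}$; thus $-x \in I_{8g-2k+1}$, on which $f_\P$ acts as $T_{4g-k+1}$, and \Cref{thm Tk oddness} yields
\[
    f_\P(-x) = T_{4g-k+1}(-x) = -T_{k-1}(x) = -f_\Q(x).
\]
The case $x \in I_{2k}$ (where $f_\P = f_\Q = T_k$) is analogous.

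Given this symmetry, I would set $\tilde\psi(x) := -\psi_\P(-x)$, an increasing homeomorphism of $\Sb$ with $\tilde\psi(0) = 0$. Using $-f_\Q(-z) = f_\P(z)$ from the previous step, one computes
\[
    \tilde\psi \circ f_\Q \circ \tilde\psi^{-1}(y)
    = -\psi_\P\bigl(-f_\Q(-\psi_\P^{-1}(-y))\bigr)
    = -\psi_\P\bigl(f_\P(\psi_\P^{-1}(-y))\bigr)
    = -\ell_\P(-y),
\]
which is piecewise linear of constant slope $\lambda$. By the uniqueness in \Cref{newthm} together with the normalizations $\tilde\psi(0) = 0 = \psi_\Q(0)$, we conclude $\tilde\psi = \psi_\Q$, that is, $\psi_\Q(x) = -\psi_\P(-x)$. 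The theorem is thus equivalent to the statement that $\psi_\P$ is odd: $\psi_\P(-x) = -\psi_\P(x)$.

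For this oddness, my starting observation would be that the Parry formulas in \Cref{thm eigenvector} and \Cref{thm left eig} give values $p_i$ and $v_i$ depending only on the parity of $i$, while the reflection $r$ permutes Markov intervals in a parity-preserving way: $I_{2k-1}\mapsto I_{8g-2k+1}$ and $I_{2k}\mapsto I_{8g-2k}$. Hence $\mu'_\P(I_i) = p_i = \mu'_\P(r(I_i))$, and so $\mu'_\P$ and $r_*\mu'_\P = \mu'_\Q$ agree on every top-level Markov interval. The main obstacle is upgrading this agreement to equality of the full measures---equivalently, to oddness of $\psi_\P$---since the level-$n$ cylinder refinements $\bigcap_{j=0}^n f_\A^{-j}(I_{i_j})$ are genuinely different subsets of $\Sb$ for $\A=\P$ versus $\A=\Q$, so a cylinder-by-cylinder comparison is unavailable. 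Here I would expect to invoke the technical lemma promised for the appendix, presumably an identity built on \Cref{thm Tk shifting} that relates refinements under $T_k$ to refinements under $T_{k-1}$ in a way preserving $\psi_\P$-measure, thereby matching the self-similar structures of $\mu'_\P$ and $\mu'_\Q$.
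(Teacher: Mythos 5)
Your steps establishing $f_\Q(x)=-f_\P(-x)$ and then $\psi_\Q(x)=-\psi_\P(-x)$ via uniqueness in \Cref{newthm} are correct (they reproduce exactly the identity the paper derives later, in the proof of \Cref{thm psi symmetry}). But after that, your argument stops precisely where the real content of the theorem begins. Given $\psi_\Q(x)=-\psi_\P(-x)$, the statement $\psi_\P=\psi_\Q$ is \emph{equivalent} to oddness of $\psi_\P$, so your reduction is a reformulation rather than progress: you still must prove that the pullback Parry measures satisfy $\mu'_\P=\mu'_\Q$ (equivalently, that $\mu'_\P$ is invariant under $x\mapsto -x$). Your observation that the two measures agree on the top-level Markov intervals (because $p_i$ and $v_i$ depend only on parity and the reflection preserves parity of indices) is true but, as you yourself note, far from sufficient, since the level-$n$ refinements for $\P$ and $\Q$ are different partitions of $\Sb$. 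At this point you only ``expect to invoke the technical lemma promised for the appendix,'' with a guess at its content; that is not a proof, and the guess is off: the appendix result (\Cref{thm P as unions of Q}) is not an identity built on \Cref{thm Tk shifting}, but a combinatorial recoding theorem, proved via the group relation $T^{-1}_k T^{-1}_{\sigma(k)+1}=T^{-1}_{k-1}T^{-1}_{\sigma(k)+4g-2}$ (\Cref{thm generator relation}) and its iterates, stating that every $\P$-cylinder interval of rank $n+1$ is a union of either two $\Q$-cylinders of rank $n+2$ whose final symbols have opposite parity, or of $16g-15$ such $\Q$-cylinders with exactly $8g-7$ even and $8g-8$ odd final symbols, all sharing the initial symbol $\omega_0$.

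Even granting that lemma, you have not carried out the step that actually closes the argument in the paper: computing $\mu'_\Q$ of a $\P$-cylinder from the decomposition, using the parity structure of the eigenvector $v$ in \eqref{v again} together with the quadratic identity \eqref{lambda quadratic} (e.g.\ $v_{\eta_{n+1}}+v_{\eta'_{n+1}}=\lambda c$ in the two-cylinder case, and $(8g-7)(\lambda-1)c+(8g-8)c=\lambda(\lambda-1)c$ in the other), which yields $\mu'_\Q(\CylI\P\omega)=\mu'_\P(\CylI\P\omega)$ on a generating family of Borel sets and hence $\psi_\P=\psi_\Q$ via \eqref{psi formula from Parry}. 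Note also that the paper does not prove oddness of $\psi_\P$ independently and then deduce the theorem; it goes the other way, proving $\psi_\P=\psi_\Q$ directly by this measure comparison and only afterwards combining it with $\psi_\Q(x)=-\psi_\P(-x)$ to obtain oddness. So the essential ingredient of the theorem --- the cylinder recoding plus the eigenvector computation --- is missing from your proposal.
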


To prove this, we need to connect the cylinder intervals of the two circle maps $f_\P$ and $f_\Q$. Given an $\A$-admissible sequence $\omega=(\omega_0,\omega_1,\dots,\omega_n)$ with $\omega_i \in \{1,\dots,16g-8\}$, we define the corresponding \emph{$\A$-cylinder interval}
\< \label{cylinder definition}
	\CylI\A{(\omega_0,\omega_1,\dots,\omega_n)}
	:= I_{\omega_0} \cap f_\A^{-1}(I_{\omega_1}) \cap \cdots \cap f_\A^{-n}(I_{\omega_n}).
\>

\begin{thm} \label{thm P as unions of Q}
	Let $\omega = (\omega_0,\dots,\omega_n)$ be $\P$-admissible. There exists a $\Q$-admissible sequence $(\eta_0,\dots,\eta_n, \eta_{n+1})$ such that $\eta_0 = \omega_0$, $\eta_{n+1}$ is odd, and
    \begin{enumerate}[\quad(i)] 
        \item \label{item recode 2} if $\omega_n$ is odd then 
        \< \label{recode 2} \CylI{\P}{\omega} = \CylI{\Q}{(\eta_0,\dots,\eta_n,\eta_{n+1})} \cup \CylI{\Q}{(\eta_0,\dots,\eta_n,\eta_{n+1}+1)}. \>
        
        \item \label{item recode 17} if $\omega_n$ is even then either 
        \< \label{recode 17} \CylI\P\omega = \CylI\Q{(\eta_0,\dots,\eta_n)} = \bigcup_{i=0}^{16g-14} \CylI\Q{(\eta_0,\dots,\eta_n,2\sigma(\eta_n/2)+4+i)} \>
        or
        \< \label{recode 2+15} \CylI\P\omega = \CylI\Q{(\eta_0,\dots,\eta_n,\eta_{n+1})} \cup \CylI\Q{(\eta_0,\dots,\eta_n,\eta_{n+1}+1)} \cup  \bigcup_{i=0}^{16g-16} \CylI\Q{(\eta_0,\dots,\eta_n+1,2\sigma(\frac{\eta_n+1}{2})+6+i)}. \>
    \end{enumerate}
\end{thm}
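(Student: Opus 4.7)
The plan is to prove this by induction on~$n$.

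For the base case $n = 0$, the cylinder is $\CylI\P{(\omega_0)} = I_{\omega_0}$. When $\omega_0 = 2k-1$ is odd, I would set $\eta_0 = 2k-1$ and $\eta_1 = 2\sigma(k)+8g-5$ (which is odd since $8g-5$ is odd); identity~\eqref{Markov odd Q} then gives $I_{2k-1} = \CylI\Q{(\eta_0,\eta_1)} \cup \CylI\Q{(\eta_0,\eta_1+1)}$, establishing case~(i). When $\omega_0 = 2k$ is even, we have $\CylI\P{(2k)} = I_{2k} = \CylI\Q{(2k)}$, and the decomposition into length-$2$ $\Q$-cylinders follows directly from~\eqref{Markov even}, yielding the first alternative of case~(ii).

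For the inductive step, I would assume the statement for $\P$-admissible sequences of length $n+1$. Given $\omega = (\omega_0, \ldots, \omega_{n+1})$ of length $n+2$, I would first apply the induction hypothesis to the truncation $\omega^- := (\omega_0, \ldots, \omega_n)$, obtaining a $\Q$-admissible sequence $(\eta_0, \ldots, \eta_{n+1})$ with $\eta_0 = \omega_0$, $\eta_{n+1}$ odd, and the prescribed decomposition of $\CylI\P{\omega^-}$ into length-$(n+2)$ $\Q$-cylinders. Using the identity $\CylI\P\omega = \CylI\P{\omega^-} \cap f_\P^{-(n+1)}(I_{\omega_{n+1}})$, I would then refine each length-$(n+2)$ $\Q$-cylinder summand by selecting an appropriate additional symbol $\eta_{n+2}$.

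The refinement relies on the following key observation. Each length-$(n+2)$ $\Q$-cylinder $\CylI\Q{(\eta_0,\ldots,\eta_{n+1})}$ with $\eta_{n+1}$ odd is partitioned by one more application of $f_\Q$ into exactly two length-$(n+3)$ sub-cylinders (one with odd and one with even last symbol) by~\eqref{Markov odd Q}. The constraint $f_\P^{n+1}(x) \in I_{\omega_{n+1}}$ selects a specific subset of these sub-cylinders. Recall that $f_\P$ and $f_\Q$ coincide on even intervals and differ only on odd ones, where $f_\P = T_k$ and $f_\Q = T_{k-1}$; the extra $f_\Q$-iteration carried by $\eta^-$ (the reason the $\eta$-sequence is one longer than the $\omega$-sequence) is what compensates for this discrepancy, ensuring the selection can always be made consistently.

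The main obstacle I expect is the detailed case analysis distinguishing the two alternatives of case~(ii) and verifying the explicit index formulas $2\sigma(\eta_n/2)+4+i$ and $2\sigma((\eta_n+1)/2)+6+i$. Which alternative arises depends on whether the $T_k$-preimage in $I_{\omega_n}$ of the Markov point separating $I_{\omega_{n+1}}$ from its adjacent Markov interval lies in the interior of a single length-$(n+2)$ $\Q$-cylinder (first alternative) or coincides with a boundary between two adjacent ones (second alternative, in which the $\P$-cylinder straddles both). This combinatorial verification will be intricate but reduces to direct computation using the side-pairing rule~$\sigma$ and the images of $T_k$ on Markov partition points given by~\eqref{Markov odd P}--\eqref{Markov even}.
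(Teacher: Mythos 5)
Your base case is fine, but the inductive step has a genuine gap precisely where the real work lies: the ``refinement'' of each rank-$(n+2)$ $\Q$-cylinder by the new constraint $f_\P^{-(n+1)}(I_{\omega_{n+1}})$. A priori the set $\{\,x\in \CylI\Q{(\eta_0,\dots,\eta_{n+1})} : f_\P^{n+1}(x)\in I_{\omega_{n+1}}\,\}$ has no reason to be a union of rank-$(n+3)$ $\Q$-cylinders at all: the $\Q$-sub-cylinder structure is cut out by $f_\Q^{n+2}$, while your constraint is phrased in terms of $f_\P^{n+1}$, and along the orbit these two maps apply \emph{different} generators every time the point visits an odd-indexed interval ($f_\P$ acts by $T_k$ on $I_{2k-1}$, $f_\Q$ by $T_{k-1}$). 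Saying that ``the extra $f_\Q$-iteration compensates for this discrepancy'' is exactly the statement to be proved, and proving it is not a single-step computation with $\sigma$ and~\eqref{Markov odd P}--\eqref{Markov even}: it requires word identities in $\Gamma$ relating $\P$-compositions to $\Q$-compositions, such as $T^{-1}_k T^{-1}_{\sigma(k)+1}=T^{-1}_{k-1}T^{-1}_{\sigma(k)+4g-2}$ and its iterated form $T^{-1}_k\big(T^{-1}_{\sigma(k)+1}T^{-1}_{k+4g-1}\big)^m=\big(T^{-1}_{k-1}T^{-1}_{\sigma(k)+4g-3}\big)^m T^{-1}_k$, together with admissibility bookkeeping to check that the resulting index strings are actually $\Q$-admissible. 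In particular, when the recoded sequence contains a long alternating block $(2\sigma(k)+2,\,2k+8g-2,\,2\sigma(k)+2,\dots)$ the mismatch between the $\P$- and $\Q$-codings persists over many steps and must be resolved all at once with the iterated identity; your sketch has no mechanism for this, and it is not visible from one application of $T_k$ to Markov endpoints.

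A second, related problem is that your induction hypothesis is too weak to drive the step in case~(ii). Which of the two alternatives \eqref{recode 17} or \eqref{recode 2+15} holds is not determined by $\omega_n$ alone; it depends on earlier symbols (whether all $\omega_i$ are even, whether $\omega_{n-1}$ is odd, and whether $(\omega_{n-1},\omega_n)$ has the special form $(2m,2\sigma(m)+4)$). To make an induction close, you must strengthen the statement to record this dichotomy (and the explicit value $\eta_{n+1}=2\sigma(\lfloor\eta_n/2\rfloor)+8g-5$), since the refinement of a cylinder decomposed as in \eqref{recode 17} behaves differently from one decomposed as in \eqref{recode 2+15}. Your heuristic criterion (whether a preimage of a Markov point falls inside a $\Q$-cylinder or on a boundary) is not a checkable induction invariant as stated. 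So the proposal identifies the right objects but leaves unproved the central claim that the $\P$-refinement is compatible with the $\Q$-cylinder structure with the prescribed indices; without the generator relations and a strengthened hypothesis, the induction does not go through.
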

The proof of \Cref{thm P as unions of Q}, as well as the distinction between the two forms~\eqref{recode 17} and~\eqref{recode 2+15}, is rather technical and is left for \Cref{sec appendix}.

\begin{proof}[{Proof of \Cref{thm psi odd}}] \label{proof of psi odd}
Recall from \Cref{thm eigenvector} that for both $\A=\P$ and $\A=\Q$ the right-eigenvector $v$ of $M_\A$ corresponding to eigenvalue $\lambda$ is
\[ \label{v again} 
    v = c \cdot (1,\; \lambda\!-\!1,\; 1,\; \lambda\!-\!1,\; \dots \;,\; 1,\; \lambda\!-\!1), \]
where $c = 1/(\lambda(8g-4))$ corresponds to $\rho'_\A(\Sb) = 1$. We prove $\psi_\P = \psi_\Q$ by showing that $\rho_\P'(\CylI{\P}\omega) = \rho_\Q'(\CylI{\P}\omega)$ for all finite $\P$-admissible sequences~$\omega$.
Note that, because $\phi_\A^{-1}$ maps a cylinder interval to a symbolic cylinder, we have
\[ \label{cylinder measure}
    \rho'_\P(\CylI\P{(\omega_0,\ldots,\omega_n)}) = \frac{v_{\omega_n}}{\lambda^n} 
    \qquad\text{and}\qquad 
    \rho'_\Q(\CylI\Q{(\eta_0,\ldots,\eta_n)}) = \frac{v_{\eta_n}}{\lambda^n},
\]
where $(\omega_0,...,\omega_n)$ is $\P$-admissible and $(\eta_0,...,\eta_n)$ is $\Q$-admissible.

\medskip
Let $\omega = (\omega_0,\dots,\omega_n)$ be $\P$-admissible, and suppose $\omega_n$ is odd. Then $v_{\omega_n} = c$, and so
\[
	\rho_\P'(\CylI\P\omega)
	= \frac{v_{\omega_n}}{\lambda^n}
	= \frac{c}{\lambda^n}.
\]
By~\Cref{thm P as unions of Q},  
\[ \CylI\P\omega = \CylI\Q\eta \cup \CylI\Q{\eta'} \]
for some $\eta = (\eta_0,\dots,\eta_{n+1})$ and $\eta' = (\eta_0,\dots,\eta_n,\eta_{n+1}+1)$. Since $\eta_{n+1}$ and $\eta'_{n+1}$ have different parities, we know
\[ v_{\eta_{n+1}} + v_{\eta'_{n+1}} = c + (\lambda-1)c = \lambda c, \] 
and can compute
\begin{align*}
	\rho_\Q'(\CylI\P\omega)
	&= \rho_\Q'(\CylI\Q{\eta} \cup \CylI\Q{\eta'}) 
	= \frac{v_{\eta_{n+1}}}{\lambda^{n+1}} + \frac{v_{\eta'_{n+1}}}{\lambda^{n+1}}
	= \frac{1}{\lambda^{n+1}} (v_{\eta_{n+1}} + v_{\eta'_{n+1}})
	\\* &= \frac{1}{\lambda^{n+1}} (\lambda c) 
	= \frac{c}{\lambda^{n}} 
	= \rho_\P'(\CylI\P\omega).
\end{align*}

If instead $\omega_n$ is even, then \Cref{thm P as unions of Q} gives $\CylI\P\omega = \CylI\Q{\eta^{(1)}} \cup \cdots \cup \CylI\Q{\eta^{(16g-15)}}$ with exactly $8g-7$ of the final symbols $\eta^{(i)}_{n+1}$ being even (so $8g-8$ are odd). Therefore
\begin{align*}
	\rho_\Q'(\CylI\P\omega)
	&= \rho_\Q'(\CylI\Q{\eta^{(1)}} \cup \cdots \cup \CylI\Q{\eta^{(16g-15)}})
	= \frac{1}{\lambda^{n+1}} (v_{\eta^{(1)}_{n+1}} \!+ \cdots + v_{\eta^{(16g-15)}_{n+1}}) 
	\\* &= \frac{1}{\lambda^{n+1}} \big( (8g-7)(\lambda-1)c + (8g-8)c \big) 
	\\ &= \frac{c}{\lambda^{n+1}} \lambda(\lambda-1) \qquad\text{by~\eqref{lambda quadratic}} 
	\\* &= \frac{(\lambda-1)c}{\lambda^n}
	= \frac{v_{\omega_n}}{\lambda^n}
	= \rho_\P'(\CylI\P\omega),
\end{align*}
where $v_{\omega_n} = (\lambda-1)c$ because $\omega_n$ is even.

In both cases we have $\rho_\Q'(\CylI\P\omega) = \rho_\P'(\CylI\P\omega)$, and since $\{\, \CylI\P\omega : \omega \text{ is $\P$-admissible} \,\}$ generates all Borel sets in $\Sb$, the two measures $\rho_\P'$ and $\rho_\Q'$ on $\Sb$ are identical. From~\eqref{psi formula from Parry}, this implies that $\psi_\P = \psi_\Q$.
\end{proof}

For the remainder of~\Cref{sec conjugacy}, we deal almost exclusively with $\psi_\P$, although we will briefly invoke \Cref{thm psi odd}. We now show that $\psi_\P$ has translational (\Cref{thm psi periodic}) and central (\Cref{thm psi symmetry}) symmetry. Both of these properties can be seen in \Cref{fig psi}.

\begin{prop} \label{thm psi periodic} For all $x\in\Sb$, $\psi_\P(x + \alpha) = \psi_\P(x) + \alpha$. \end{prop}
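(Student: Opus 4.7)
The plan is to prove that $\mu'_\P$ is invariant under the rotation $R_\alpha \colon x \mapsto x + \alpha$, from which the claim follows immediately: by~\eqref{psi formula from Parry},
\[ \psi_\P(x+\alpha) - \psi_\P(x) = \mu'_\P\big([x, x+\alpha]\big), \]
so $R_\alpha$-invariance of $\mu'_\P$ makes the right-hand side a constant $c$, and telescoping over $x = 0, \alpha, 2\alpha, \ldots, (8g-5)\alpha$ together with $\psi_\P\big((8g-4)\alpha\big) = 2\pi$ (viewing $\psi_\P$ as a degree-one lift $\R \to \R$) forces $c = \alpha$.

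The key step will be the functional relation $f_\P(x+\alpha) = f_\P(x) + \beta$, with $\beta := (4g-3)\alpha$, for all $x \in \Sb$. This is an immediate consequence of \Cref{thm Tk shifting}: in the regular case, $R_\alpha$ permutes the defining partition $\{[P_k, P_{k+1})\}$ of $f_\P$, so for $x \in [P_k, P_{k+1})$ one has $x + \alpha \in [P_{k+1}, P_{k+2})$ and
\[ f_\P(x+\alpha) = T_{k+1}(x+\alpha) = T_k(x) + \beta = f_\P(x) + \beta. \]
Iterating yields $f_\P^n(x+\alpha) = f_\P^n(x) + (4g-3)^n \alpha$ for every $n \ge 0$. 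Since $R_\alpha$ also shifts the Markov partition via $I_j \mapsto I_{j+2}$, this translates into the cylinder image formula
\[ R_\alpha\big(\CylI\P{(\omega_0,\ldots,\omega_n)}\big) = \CylI\P{(\omega_0+2,\,\omega_1+2(4g-3),\,\ldots,\,\omega_n+2(4g-3)^n)} \]
with all indices taken mod $16g-8$.

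Because $4g - 3$ is odd, every increment $2(4g-3)^i$ is even, so the parities of all indices $\omega_i$ are preserved under $R_\alpha$. Combining this with the Parry-measure formula $\mu'_\P(\CylI\P{\omega}) = p_{\omega_0} v_{\omega_n}/(\lambda^n\, v_{\omega_0})$ derived in the proof of \Cref{thm psi odd}, together with \Cref{thm eigenvector,thm left eig}---which show that $v_i$ and $p_i$ depend only on the parity of~$i$---we conclude $\mu'_\P(R_\alpha C) = \mu'_\P(C)$ for every cylinder interval~$C$, and hence for every Borel set by the standard generating argument. I expect the main technical point to be the careful bookkeeping of indices mod $16g-8$ in the cylinder image formula; the parity observation is the crucial insight that tames the rapidly growing shifts $2(4g-3)^i$ and reduces everything to the two-valued eigenvector coordinates.
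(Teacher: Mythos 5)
Your proof is correct, but it takes a genuinely different route from the paper's. The paper never touches the Parry measure here: it defines $\phi$ to agree with $\psi_\P$ on $[P_1,P_2)$, extends it by decreeing $\phi(x+\alpha)=\phi(x)+\alpha$, and then uses \Cref{thm Tk shifting} and induction on $k$ to show that $\phi\circ T_k\circ\phi^{-1}$ is affine of slope $\lambda$ on each $[P'_k,P'_{k+1}]$, so $\phi$ conjugates $f_\P$ to a constant-slope map and equals $\psi_\P$ by the uniqueness clause of \Cref{newthm}. You instead work from the Markov description~\eqref{psi formula from Parry} and prove $R_\alpha$-invariance of $\mu'_\P$ via the cylinder image formula, the evenness of the shifts $2(4g-3)^i$, and the parity-only dependence of $v$ and $p$ (\Cref{thm eigenvector,thm left eig}); this is closer in spirit to the paper's proof of \Cref{thm psi odd} than to its proof of \Cref{thm psi periodic}. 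Both arguments hinge on \Cref{thm Tk shifting}; the paper's is shorter and needs only existence and uniqueness of the constant-slope conjugacy, while yours costs more symbolic bookkeeping but avoids the uniqueness theorem and yields the equivalent, conceptually pleasant statement that the pullback Parry measure is itself rotation-invariant. Two places deserve one extra line each: to apply the measure formula to the shifted word you should note it is again $\P$-admissible, which follows from $\sigma(k+1)=\sigma(k)+4g-3$, giving $m_{i,j}=m_{i+2,\,j+2(4g-3)}$ (indices mod $16g-8$) and, upon iteration, preservation of admissibility under your index shift; and the constancy of $\mu'_\P([x,x+\alpha])$ is not literally immediate from invariance---rather, invariance gives $\mu'_\P([x+\alpha,y+\alpha])=\mu'_\P([x,y])$, i.e.\ $\psi_\P(y+\alpha)-\psi_\P(x+\alpha)=\psi_\P(y)-\psi_\P(x)$, and taking $x=0$ identifies the constant as $\psi_\P(\alpha)$, after which your telescoping gives $c=\alpha$.
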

\begin{proof}
Define $\phi:\Sb\to\Sb$ recursively by 
\[ \phi(x) := \Piecewise{ \psi_\P(x) &\text{if } x \in [P_1,P_2) \\ \phi(x-\alpha) + \alpha &\text{otherwise.} } \]
Thus $\phi(x+\alpha)=\phi(x)+\alpha$ for all $x$ by design, and since $\phi$ is increasing and continuous, we also have $\phi^{-1}(x+\alpha) = \phi^{-1}(x) + \alpha$ for all $x$.

Denote $x' = \phi(x)$.
By induction on $1 \le k \le 8g-4$, we will prove that there exists~$b_k$ such that $\phi \circ T_k \circ \phi^{-1}(x') = \lambda x'+b_k$ for $x' \in [P'_k,P'_{k+1}]$.
By construction the claim is true for $k=1$ since $\phi\big|_{[P_1,P_2]} = {\psi_\P}\big|_{[P_1,P_2]}$. 
Now assume it is true for $k$. Then by \Cref{thm Tk shifting}, writing $\beta = (4g-3)\alpha$, we have
\begin{align*}
	\phi \circ T_{k+1} \circ \phi^{-1}(x')
	&= \phi(T_{k+1}(x))
	= \phi(T_k(x-\alpha)+\beta)
	= \phi(T_k(x-\alpha)) + \beta
	\\&= \phi(T_k(\phi^{-1}(x'-\alpha))) + \beta
	= \lambda(x'-\alpha) + b_k + \beta
	\\&= \lambda x' + (b_k + \beta - \lambda \alpha)
\end{align*}
and so the claim holds for $k+1$ with $b_{k+1} = b_k + \beta - \lambda\alpha$.

Thus our map $\phi$, which satisfies $\phi(x+\alpha)=\phi(x)+\alpha$ for all $x$ by construction, conjugates $f_\P$ to a constant-slope map on $\Sb$. By the uniqueness of $\psi_\P$ (\Cref{newthm}), $\phi = \psi_\P$.
\end{proof}

Notice that $\psi_\P(x+n\alpha) = \psi_\P(x) + n\alpha$ for any integer $n$, as well as $\psi_\P^{-1}(x+n\alpha) = \psi_\P^{-1}(x)+n\alpha$.
Since $2\pi$ is an integer multiple of $\alpha$, $\psi_\P$ is well defined on $\Sb$, and we can choose the point where it is equal to $0$ at our convenience. 
We will assume that $\psi_\P$ fixes the point $C_{2g}=0$. Then \Cref{thm psi periodic} implies that $\psi_\P(C_k)=C_k$ for all~$k$.

\begin{prop} \label{thm psi symmetry} For all $x\in\Sb$, $\psi_\P(C_k+x)+\psi_\P(C_k-x) = 2C_k$. \end{prop}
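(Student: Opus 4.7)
The plan is to leverage the uniqueness clause of \Cref{newthm} together with the symmetries already established: the identity $T_k(-x) = -T_{4g-k}(x)$ from \Cref{thm Tk oddness}, the coincidence $\psi_\Q = \psi_\P$ from \Cref{thm psi odd}, and the $\alpha$-equivariance of $\psi_\P$ from \Cref{thm psi periodic}. Writing $\rho(x) = -x$ for reflection of $\Sb$ about $0 = C_{2g}$, I will build the candidate $\tilde\psi := \rho \circ \psi_\Q \circ \rho$ and show that it must coincide with $\psi_\P$; this gives the $k = 2g$ case of \Cref{thm psi symmetry}, and the general case falls out of translation equivariance.

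The first step is to verify that $\rho$ conjugates $f_\Q$ to $f_\P$, that is, $\rho \circ f_\Q \circ \rho = f_\P$ away from the discontinuity points of the partitions. \Cref{thm Tk oddness} already tells us how $\rho$ intertwines individual generators, so the task reduces to matching partitions. Using the explicit expressions $P_k = -e^{\i(k-1)\alpha}w$ and $Q_{k+1} = e^{\i(k-1)\alpha}\bar w$ (with $w = \sqrt{1-\cos\alpha}+\i\sqrt{\cos\alpha}$) obtained inside the proof of \Cref{thm Tk symmetry}, a short computation using $\pi = (4g-2)\alpha$ yields
\[ -P_k = Q_{4g-k+1}, \qquad -Q_k = P_{4g-k+1}, \]
so $\rho$ carries the interval $[P_{4g-k}, P_{4g-k+1})$ onto $(Q_k, Q_{k+1}]$, which agrees with $[Q_k, Q_{k+1})$ up to a single endpoint. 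Combining this with \Cref{thm Tk oddness} delivers the required piecewise equality.

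The map $\tilde\psi$ is an increasing homeomorphism of $\Sb$ (two orientation-reversing factors and one orientation-preserving factor), and the computation
\[ \tilde\psi \circ f_\P \circ \tilde\psi^{-1} = \rho \circ \psi_\Q \circ (\rho \circ f_\P \circ \rho) \circ \psi_\Q^{-1} \circ \rho = \rho \circ \psi_\Q \circ f_\Q \circ \psi_\Q^{-1} \circ \rho = \rho \circ \ell_\Q \circ \rho \]
shows that $\tilde\psi$ conjugates $f_\P$ to a map of constant slope $\lambda$. By the uniqueness clause of \Cref{newthm} applied to $f_\P$, $\tilde\psi = R_\theta \circ \psi_\P$ for some rotation $R_\theta$; since \Cref{thm psi odd} rewrites $\tilde\psi(x)$ as $-\psi_\P(-x)$ and the normalization $\psi_\P(0) = 0$ forces $\tilde\psi(0) = 0$, we get $\theta = 0$ and hence $\psi_\P(-x) = -\psi_\P(x)$. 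This is the proposition at $k = 2g$. The general case follows by \Cref{thm psi periodic} applied to $C_k = (k-2g)\alpha$:
\[ \psi_\P(C_k + x) + \psi_\P(C_k - x) = \bigl(\psi_\P(x) + (k\!-\!2g)\alpha\bigr) + \bigl(-\psi_\P(x) + (k\!-\!2g)\alpha\bigr) = 2C_k. \]
The technically delicate step is the partition bookkeeping in paragraph two --- verifying the reflection identifications $-P_k = Q_{4g-k+1}$ and $-Q_k = P_{4g-k+1}$ and tracking orientation so that the interval endpoints line up --- but once this is done the remainder is a clean application of uniqueness.
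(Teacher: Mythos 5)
Your proposal is correct and takes essentially the same route as the paper: there, one checks that $\xi(x):=-\psi_\P(-x)$ conjugates $f_\Q$ to a constant-slope map (using $f_\Q(x)=-f_\P(-x)$, which comes from \Cref{thm Tk oddness} and the reflected partition $-[P_k,P_{k+1}]=[Q_{4g-k},Q_{4g-k+1}]$), invokes the uniqueness in \Cref{newthm} to get $\psi_\Q(x)=-\psi_\P(-x)$, and then combines this with \Cref{thm psi odd} and \Cref{thm psi periodic} exactly as you do. Your argument is the mirror image of this (conjugating $f_\P$ by $\rho\circ\psi_\Q\circ\rho$ rather than $f_\Q$ by $\rho\circ\psi_\P\circ\rho$), with the endpoint identities $-P_k=Q_{4g-k+1}$, $-Q_k=P_{4g-k+1}$ and the rotation/normalization step spelled out explicitly where the paper leaves them implicit.
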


\begin{proof}
	First, we prove that 
    \< \label{psi P to Q oddness} \psi_\Q(x) = -\psi_\P(-x). \>
    By \Cref{thm Tk oddness} we have $T_{k}(-x)=-T_{4g-k}(x)$. Since $f_\P$ acts by the generator $T_k$ on $x \in [P_k,P_{k+1}]$ and $f_\Q$ acts by the generator $T_{4g-k}$ on the reflected interval $-[P_k,P_{k+1}] = [Q_{4g-k},Q_{4g-k+1}]$, we have as a result that
    \[ f_{\bar Q}(x)=-f_{\bar P}(-x). \]
    
    To prove~\eqref{psi P to Q oddness}, set $\tilde x=\xi(x):=-\psi_\P(-x)$. Then  $-\tilde x=\psi_\P(-x)$ and $-x=\psi^{-1}_\P(-\tilde x)$, and then
    \begin{align*}
        \xi\circ f_\Q\circ\xi^{-1}(\tilde x)&=\xi(f_\Q(x))
        =\xi(-f_\P(-x))=-\psi_\P(f_\P(-x)) \\*
        &= -\psi_\P\circ f_\P\circ \psi^{-1}_\P(-\tilde x)
        = -\ell_\P(-\tilde x).
    \end{align*}
    Since this is a function with constant slope $\lambda$, the claim~\eqref{psi P to Q oddness} follows by uniqueness of the conjugacy.
    
    Combing~\eqref{psi P to Q oddness} with \Cref{thm psi odd}, we obtain $\psi_\P(-x) = -\psi_\P(x)$. Because $C_k = (k-2g)\alpha$, \Cref{thm psi periodic} implies 
    \[ \psi_\P(C_k+x) = C_k + \psi_\P(x) \]
    for all $k$. Therefore we compute that
    \[
        \psi_\P(C_k+x)+\psi_\P(C_k-x)
        = C_k + \psi_\P(x) + C_k + \psi_\P(-x)
        = 2C_k. \qedhere 
	\]
\end{proof}

A crucial observation for the proof of \Cref{thm main} is that $\psi_\P$ conjugates each $T_k$ as a function on the circle $\Sb$, and the resulting function \[ S_k := \psi_\P \circ T_k \circ \psi_\P^{-1} \]
consists of two linear pieces, one with slope $\lambda$ and the other with slope $\lambda^{-1}$.
See \Cref{fig T and S}, where 
\[ P'_k := \psi_\P(P_k) \quad\text{and}\quad Q'_k := \psi_\P(Q_k). \]

\begin{figure}[hb]
    \includegraphics[width=\textwidth]{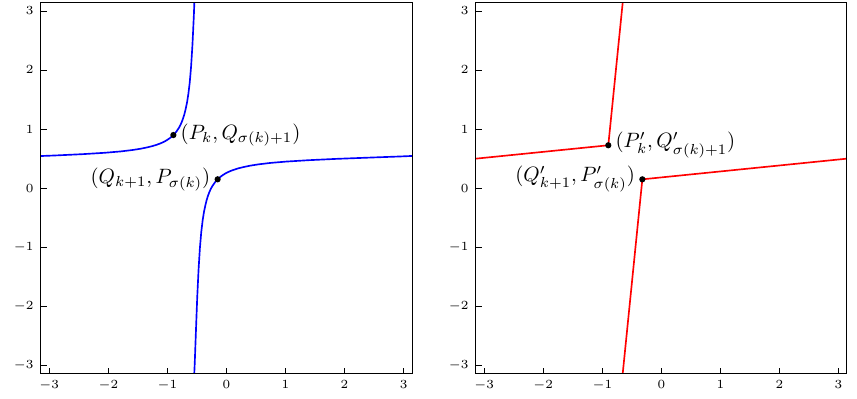}
    \caption{Plots of $T_k(x)$ (left) and $S_k(x)$ (right) with $k=3$ and $g=2$}
    \label{fig T and S}
\end{figure}


\begin{lem} \label{thm Sk linearity} The function $S_k : \Sb \to \Sb$ can be fully described as follows:
	\begin{enumerate}[\quad(a)]
		\item \label{Si-part-1} $S_k$ is linear on $[P'_k,Q'_{k+1}]$ with  slope $\lambda$;
		\item \label{Si-part-2} $S_k$ is linear on $[Q'_{k+1},P'_k]$ with slope $\lambda^{-1}$.
	\end{enumerate}
\end{lem}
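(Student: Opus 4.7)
The plan is to pin down the two endpoints of each linear piece, establish the slope $\lambda$ piece directly from the fact that $\psi_\P = \psi_\Q$ (\Cref{thm psi odd}), and then obtain the slope $1/\lambda$ piece from the relation $T_{\sigma(k)} = T_k^{-1}$.

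First I would record the boundary values of $T_k$. From the side-pairing rules stated before Proposition~\ref{thm Tk formulas}, the M\"obius transformation $T_k$ sends the extended side-$k$ geodesic onto the extended side-$\sigma(k)$ geodesic in an orientation-preserving way, so $T_k(P_k) = Q_{\sigma(k)+1}$ and $T_k(Q_{k+1}) = P_{\sigma(k)}$. Applying $\psi_\P$ to both sides yields $S_k(P'_k) = Q'_{\sigma(k)+1}$ and $S_k(Q'_{k+1}) = P'_{\sigma(k)}$, which pins down both endpoints in~(a) and~(b).

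Next I would establish part~(a) by piecing together two overlapping intervals on which $S_k$ is already known to have slope $\lambda$. On the interval $[P_k,P_{k+1})$ we have $f_\P = T_k$, so $\ell_\P$ and $S_k$ coincide on $[P'_k, P'_{k+1})$; since $\ell_\P$ has constant slope $\lambda$, so does $S_k$ there. Similarly, on $[Q_k,Q_{k+1})$ we have $f_\Q = T_k$, hence $\ell_\Q = S_k$ on $[Q'_k, Q'_{k+1})$ (using $\psi_\P = \psi_\Q$), and $\ell_\Q$ has the same constant slope $\lambda$ by \Cref{thm entropy for extremal}. Because the ordering on $\Sb$ is $P'_k < Q'_k < P'_{k+1} < Q'_{k+1}$, the two slope-$\lambda$ intervals overlap on $[Q'_k, P'_{k+1})$ and their union is $[P'_k, Q'_{k+1})$. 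Since $T_k$ is a M\"obius homeomorphism of $\partial\D$ and $\psi_\P$ is a homeomorphism, $S_k$ is continuous on the closed arc $[P'_k, Q'_{k+1}]$, so it must be linear with slope $\lambda$ throughout.

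For part~(b), I would exploit $T_{\sigma(k)} = T_k^{-1}$, whence $S_{\sigma(k)} = S_k^{-1}$. Applying the argument of part~(a) to the index $\sigma(k)$ gives that $S_{\sigma(k)}$ is linear with slope $\lambda$ on $[P'_{\sigma(k)}, Q'_{\sigma(k)+1}]$, mapping $P'_{\sigma(k)} \mapsto Q'_{k+1}$ and $Q'_{\sigma(k)+1} \mapsto P'_k$. Inverting, $S_k$ is linear with slope $1/\lambda$ on the complementary arc $[Q'_{k+1}, P'_k]$ (traversed counter-clockwise through $\pm\pi$), connecting $(Q'_{k+1}, P'_{\sigma(k)})$ to $(P'_k, Q'_{\sigma(k)+1})$. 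I do not expect any real obstacle here: the only slightly delicate point is keeping the cyclic orderings of the $P'$, $Q'$ consistent so that the two overlapping slope-$\lambda$ intervals genuinely cover the short arc $[P'_k, Q'_{k+1}]$, and so that the slope-$1/\lambda$ piece lives on the complementary arc; this is immediate from $\psi_\P$ being increasing on $\Sb$.
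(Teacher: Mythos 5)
Your argument is correct, and part~(\ref{Si-part-2}) is exactly the paper's proof: apply part~(\ref{Si-part-1}) to the index $\sigma(k)$ and invert, using $S_{\sigma(k)}^{-1}=\psi_\P\circ T_k\circ\psi_\P^{-1}=S_k$. For part~(\ref{Si-part-1}), however, you take a genuinely different (and somewhat more direct) route. The paper transfers linearity from $[P'_k,P'_{k+1}]$ to $[Q'_k,Q'_{k+1}]$ by exploiting the central symmetry of $T_k$ around $C_k$ (\Cref{thm Tk symmetry}) together with the central symmetry of $\psi_\P$ (\Cref{thm psi symmetry}), so that $S_k$ itself is centrally symmetric and its linearity propagates to the reflected interval. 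You instead observe that on $[Q_k,Q_{k+1})$ the map $f_\Q$ acts by $T_k$, so by \Cref{thm psi odd} the restriction of $S_k$ to $[Q'_k,Q'_{k+1})$ coincides with $\ell_\Q$, which has slope $\lambda$ by \Cref{thm entropy for extremal}; the two overlapping slope-$\lambda$ pieces then glue along $[Q'_k,P'_{k+1})$ exactly as in the paper. Both routes rest on \Cref{thm psi odd} (the paper's \Cref{thm psi symmetry} is itself deduced from it), but yours bypasses the central-symmetry propositions entirely, at the modest cost of invoking the equality of the slopes of $\ell_\P$ and $\ell_\Q$; the paper's version keeps the argument internal to $\psi_\P$ and showcases the symmetry structure it develops anyway. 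One small point: the endpoint identities $T_k(P_k)=Q_{\sigma(k)+1}$ and $T_k(Q_{k+1})=P_{\sigma(k)}$, which you justify informally from the side-pairing orientation, are cleaner to quote directly from \cite[Proposition~2.2]{KU17} (as the paper does), since that is where the orientation of the pairing is pinned down.
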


\begin{proof}
(\ref{Si-part-1}) By construction, $S_k$ is linear on $[P'_k,P'_{k+1})$ since $[P_k,P_{k+1})$ is the interval where $f_\P$ acts as~$T_k$. 
Given the central symmetry of $T_k$ (\Cref{thm Tk symmetry}) and $\psi_\P$ (\Cref{thm psi symmetry}) around $C_k$, the composition $S_k = \psi_\P \circ T_k \circ \psi_\P^{-1}$ must also be symmetric around $C_k$. The image of $[P'_k,P'_{k+1}]$ under the symmetry $C_k+x \mapsto C_k-x$ is $[Q'_k,Q'_{k+1}]$, and thus $S_k$ is linear on $[Q'_k,Q'_{k+1}]$ with the same slope. Since the intervals of linearity $[P'_k,Q'_k]$ and $[Q'_k,Q'_{k+1}]$ overlap, there is no jump within their union, which is $[P'_k,Q'_{k+1}]$. We can in fact calculate 
\begin{align*}
    S_k(P'_k) &= \psi_\P(T_k(P_k)) = \psi_\P(Q_{\sigma(k)+1}) = Q'_{\sigma(k)+1} \\
    S_k(Q'_{k+1}) &= \psi_\P(T_k(Q_{k+1})) = \psi_\P(P_{\sigma(k)}) = P'_{\sigma(k)}
\end{align*} 
directly using \cite[Proposition~2.2]{KU17}.

\smallskip
(\ref{Si-part-2}) Because part~(\ref{Si-part-1}) holds for all $k$, we know $S_{\sigma(k)}$ maps $[P'_{\sigma(k)},Q'_{\sigma(k)+1}]$ linearly to $[Q'_{\sigma(\sigma(k))+1},P'_{\sigma(\sigma(k))}] = [Q'_{k+1},P'_k]$ with slope $\lambda$, and therefore $S_{\sigma(k)}^{-1}$ maps $[Q'_{k+1},P'_k]$ linearly to $[P'_{\sigma(k)},Q'_{\sigma(k)+1}]$ with slope $1/\lambda$. But
\[ S_{\sigma(k)}^{-1} = (\psi_\P \circ T_{\sigma(k)} \circ \psi_\P^{-1})^{-1} = \psi_\P \circ T_{\sigma(k)}^{-1} \circ \psi_\P^{-1} = \psi_\P \circ T_k \circ \psi_\P^{-1} \]
is exactly $S_k$.
\end{proof}

\section{Proof of Theorem~\ref{thm main}}\label{sec end}

We can now prove the rigidity of topological entropy, that is, $h_\mathrm{top}(f_\A)$ is the same for all pa\-ra\-me\-ters~$\bar A$ and for all fundamental polygons~$\Fc$.

\subsection*{Regular polygon} First we prove \Cref{thm main} in the case where $f_\A$ is associated to a regular $(8g-4)$-gon.
Let $\A = \{A_1,\dots,A_{8g-4}\}$ consist of any points satisfying $A_k \in [P_k,Q_k]$. 
Because $S_k = \psi_\P \circ T_k \circ \psi_\P^{-1}$ is linear on all of $[P'_k,Q'_{k+1}]$ with slope $\lambda$ by \Cref{thm Sk linearity}(\ref{Si-part-1}), the function $\psi_\P \circ f_\A \circ \psi_\P^{-1}$ (note the use of $f_\A$ with $\psi_\P$) is piecewise affine with constant slope~$\lambda$, and so, by \cite[Theorem~{$3'$}]{MSz} applied to such maps, the topological entropy of $f_\A$ is $\log\lambda$.

\subsection*{Teichm\"uller space}
As explained in \cite[Introduction]{AKU-Flexibility}, the Teichm\"uller space of a compact surface of genus $g$ may be viewed as the space of marked $(8g-4)$-fundamental polygons, and the partitions of the boundary $\Sb$ for various polygons are related via a homeomorphism of $\Sb$ by Fenchel--Nielsen Theorem.

Let $\tilde\Gamma$ be a Fuchsian group such that $\tilde\Gamma\backslash\D$ is a compact surface of genus $g$ whose fundamental $(8g-4$)-gon $\tilde\Fc$ is not regular. As explained in~\cite{KU17e}, there is a Fuchsian group $\Gamma$ having a regular fundamental $(8g-4)$-gon $\Fc$ and an orientation-preserving homeomorphism $h:\overline\D \to \overline\D$ such that $\tilde\Gamma=h\circ\Gamma\circ h^{-1}$.
Side $k$ of $\tilde\Fc$ extends to a geodesic $\tilde P_k \tilde Q_{k+1}$ and is glued to side $\sigma(k)$ by the map $\tilde T_k=h\circ T_k\circ h^{-1}$, where $\{T_k\}$ are generators of $\Gamma$ identifying the sides of $\Fc$.

For any $\tilde A=\{ \tilde A_1, ..., \tilde A_{8g-4} \}$ with $\tilde A_k  \in [\tilde P_k, \tilde Q_k]$, we define 
\newcommand\tfa{{\tilde f_{\tilde A}}}
\[ \tfa(x) := \tilde T_k(x) \quad\text{if }x \in [\tilde A_k,\tilde A_{k+1}). \]
Then the map $f_\A$ with $\A = \{ h^{-1}(\tilde A_1), \dots, h^{-1}(\tilde A_{8g-4}) \}$ is associated to the regular fundamental polygon, and (correcting a typo in~\cite{KU17e})
\[ \label{conj}
   \tfa =h\circ f_\A\circ h^{-1}.
\]
Since $\tfa$ is conjugate to $f_\A$, we conclude that $h_\mathrm{top}(\tfa) = h_\mathrm{top}(f_\A) = \log \lambda$, and this completes the proof of the main theorem. (In fact, the map $\psi \circ h^{-1}$ with $\psi = \psi_\P$ from \Cref{sec conjugacy} will conjugate $\tfa$ to a map of constant slope $\lambda$.)

\appendix
\section{Proof of Theorem~\ref{thm P as unions of Q}}\label{sec appendix}

We now prove~\Cref{thm P as unions of Q}, that is, that each cylinder interval $\CylI\P\omega$ can be written as unions of cylinder intervals $\CylI\Q\eta$ (see \Cref{fig cylinders} for the decompositions of $\CylI\P{(1,16)}$ and $\CylI\P{(1,17)}$). For the remainder of the \namecref{sec appendix}, we use the term ``cylinder'' (specifically, ``$\P$-cylinder'' and ``$\Q$-cylinder'') instead of ``cylinder interval'' for brevity.
\begin{figure}[hbt]
    \includegraphics[width=0.985\textwidth]{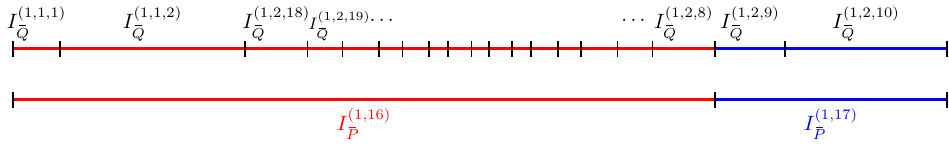}
    \caption{For $g=2$, $\CylI\P{(1,16)}$ as a union of $17$ $\Q$-cylinders (left) and $\CylI\P{(1,17)}$ as a union of two $\Q$-cylinders (right)}
    \label{fig cylinders}
\end{figure}

From~\eqref{cylinder definition}, we derive a common recursive description of cylinders:
	\[ 
		\CylI{\A}{(\omega_0,\omega_1,\dots,\omega_n)}
		= I_{\omega_0} \cap f_\A^{-1}(\CylI{\A}{(\omega_1,\dots,\omega_n)}).
	\]
	For our particular boundary maps, we have an alternative recursive relation: using the fact that each $T_k$ is bijective on all of $\Sb$, we can compute 
	\[ 
		\CylI{\P}{(\omega_0,\omega_1,\dots,\omega_n)}
		= T^{-1}_{\ceil{\omega_0/2}} (\CylI\P{(\omega_1,\dots,\omega_n)})
	\]
	for $\P$-admissible $\omega$ and
	\[ 
		\CylI{\Q}{(\omega_0,\omega_1,\dots,\omega_n)} 
		= T^{-1}_{\floor{\omega_0/2}} (\CylI\Q{(\omega_1,\dots,\omega_n)})
	\]
	for $\Q$-admissible $\omega$ without the need for an intersection.
	This is because $T^{-1}_{\ceil{\omega_0/2}}$ is contracting on $\CylI\P{(\omega_1,\dots,\omega_n)}$ and therefore $T^{-1}_{\ceil{\omega_0/2}}(\CylI\P{(\omega_1,\dots,\omega_n)})$ is already contained in $I_{\omega_0}$. 
	Note the use of ceiling~$\ceil{~~}$ for $\P$ and floor~$\floor{~~}$ for $\Q$, owing to the fact that $f_\P$ acts by $T_k$ on $I_{2k-1}$ and $I_{2k}$ while $f_\Q$ acts by $T_k$ on $I_{2k}$ and $I_{2k+1}$, and $k = \ceil{\frac{2k-1}2} = \ceil{\frac{2k}2} = \floor{\frac{2k}2} = \floor{\frac{2k+1}2}$.
	The formulas above can be extended recursively to
\begin{align*}
	\CylI{\P}{\omega} &= T^{-1}_{\ceil{\omega_0/2}} \circ T^{-1}_{\ceil{\omega_1/2}} \circ \cdots \circ T^{-1}_{\ceil{\omega_{n-1}/2}} (I_{\omega_n}) \quad\text{if $\omega$ is $\P$-admissible} \\
	\CylI{\Q}{\omega} &= T^{-1}_{\floor{\omega_0/2}} \circ T^{-1}_{\floor{\omega_1/2}} \circ \cdots \circ T^{-1}_{\floor{\omega_{n-1}/2}} (I_{\omega_n}) \quad\text{if $\omega$ is $\Q$-admissible}.
\end{align*}

\begin{samepage}
Equations~\eqref{Markov odd P},~\eqref{Markov odd Q}, and~\eqref{Markov even} can be interpreted as statements about admissible pairs of symbols:
\begin{itemize}
    \item In a $\P$-admissible sequence, an odd symbol $2k-1$ can only be followed by $2\sigma(k)+2$ or $2\sigma(k)+3$.
    \item In a $\Q$-admissible sequence, an odd symbol $2k-1$ can only be followed by $2\sigma(k)+8g-5$ or $2\sigma(k)+8g-4$.
    \item In a $\P$- or $\Q$-admissible sequence, an even symbol $2k$ can only be followed by a symbol from $\{2\sigma(k)+4, 2\sigma(k)+5, \dots, 2\sigma(k)-4\}$.
\end{itemize}
\end{samepage}

\smallskip
In the final item above, and in \Cref{thm admissibility helper} below, recall that these values are mod $16g-8$; see the explanation after \eqref{Markov even} on page~\pageref{Markov even}. The next lemma expands on the admissible pairs above and lists some longer admissible words used explicitly in the proof of \Cref{thm P as unions of Q}.

\smallskip
\begin{lem} ~ \label{thm admissibility helper}  \begin{enumerate}[\quad(a)]
    \item \label{item adm 3terms} For all $k$, if $\ell \in \{2k+8g, 2k+8g+1, \dots, 2k+8g-8\}$ then $(2k-1, 2\sigma(k)+8g-4, \ell)$ is $\Q$-admissible.
    \item \label{item adm 4terms} For all $k$, if $\ell \in \{ 2\sigma(k)+4$, $2\sigma(k)+5$, \dots, $2\sigma(k)-4 \}$ then $(2k-1, 2\sigma(k)+8g-5, 2k, \ell)$ is $\Q$-admissible.
    \item \label{item adm 5terms} For all $k$, if $\ell \in \{ 2k+8g-9, 2k+8g-8 \}$ then $(2k-1, 2\sigma(k)+8g-5, 2k-1, 2\sigma(k)+8g-4, \ell)$ is $\Q$-admissible.
\end{enumerate}
\end{lem}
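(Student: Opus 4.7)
The three claims are pure $\bar Q$-admissibility checks, so my plan is to verify them one consecutive pair at a time, applying only the two $\bar Q$-successor rules stated in the bullet list immediately above the lemma: in a $\bar Q$-admissible sequence, an odd symbol $2k-1$ may only be followed by $2\sigma(k)+8g-5$ or $2\sigma(k)+8g-4$, and an even symbol $2k$ may only be followed by any element of $\{2\sigma(k)+4,\dots,2\sigma(k)-4\}$ taken mod $16g-8$.

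The backbone of all three parts is a single index identity, which I would prove first:
\[
    \sigma\bigl(\sigma(k)+4g-2\bigr) \equiv k-4g+2 \pmod{8g-4}.
\]
Because the piecewise formula for $\sigma$ shows that $\sigma$ preserves parity, and because $4g-2$ is even, the inner argument $\sigma(k)+4g-2$ has the same parity as $k$. A one-line substitution using the appropriate branch of $\sigma$ (namely $4g-(\,\cdot\,)$ when $k$ is odd and $2-(\,\cdot\,)$ when $k$ is even) yields the identity in both cases.

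With this identity in hand, each part becomes a direct substitution. For part~(\ref{item adm 3terms}), the first pair $(2k-1, 2\sigma(k)+8g-4)$ is admissible by the odd-successor rule; writing $2\sigma(k)+8g-4 = 2m$ with $m = \sigma(k)+4g-2$, the even-successor rule allows followers in $\{2\sigma(m)+4,\dots,2\sigma(m)-4\}$, which by the identity is exactly $\{2k+8g, 2k+8g+1, \dots, 2k+8g-8\}$ mod $16g-8$. For part~(\ref{item adm 4terms}), the outer pairs $(2k-1, 2\sigma(k)+8g-5)$ and $(2k, \ell)$ are immediate from the two successor rules; the middle pair $(2\sigma(k)+8g-5, 2k)$ is the odd-successor rule applied to $2j-1 = 2\sigma(k)+8g-5$, whose follower $2\sigma(j)+8g-4$ equals $2k$ by the identity. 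For part~(\ref{item adm 5terms}), the alternating odd/even/odd/even structure allows me to reuse the computations of parts~(\ref{item adm 4terms}) and~(\ref{item adm 3terms}): the first four transitions are verified exactly as above, and the final transition out of $2\sigma(k)+8g-4 = 2m$ needs only the observation that both $2k+8g-9$ and $2k+8g-8$ lie in the arc of $16g-15$ consecutive residues produced in part~(\ref{item adm 3terms}), the 7 excluded residues being $\{2k+8g-7,\dots,2k+8g-1\}$.

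The only real obstacle I anticipate is bookkeeping: keeping straight which indices live mod $8g-4$ (the side indices fed into $\sigma$) and which mod $16g-8$ (the Markov-partition labels), and handling the cyclic endpoint convention in expressions like $\{2\sigma(k)+4,\dots,2\sigma(k)-4\}$. Once the parity analysis of $\sigma\circ\sigma$ is done cleanly, the remaining work is a handful of substitutions with no delicate step.
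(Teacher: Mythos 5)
Your proposal is correct and follows essentially the same route as the paper, which likewise reduces the lemma to the $\Q$-transition rules and index identities for $\sigma$ (your identity $\sigma(\sigma(k)+4g-2)=k-4g+2$ is just the paper's $\sigma(k-2)=\sigma(k)+2$ iterated, or a direct parity-case check as you describe). One small descriptive slip that does not affect validity: in part~(c) the sequence is odd, odd, odd, even rather than alternating, and its second transition uses the other branch of the odd-successor rule, namely $2\sigma(j)+8g-5=2k-1$, which follows from the same computation $\sigma(j)=k-4g+2$ you already carried out.
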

The proof of \Cref{thm admissibility helper} consists of careful analysis of the transition matrix $M_\Q$ along with the useful identities
\[
	\sigma(k-1) = \sigma(k)-4g+3 \qquad\text{and}\qquad
	\sigma(k-2) = \sigma(k)+2,
\]
which follow by direct verification (see also \cite[Lemma~3.2]{AK19}).
\medskip

The following two lemmas establish some relations among the generators $\{T_k\}$ which will be used in the proof of \Cref{thm P as unions of Q}. We omit the composition notation (writing, e.g., $T^{-1}_k T^{-1}_{\sigma(k)+1}$, instead of $T^{-1}_k \circ T^{-1}_{\sigma(k)+1}$).

\begin{lem}[{\cite[Lemma~3.2]{KU17}}] \label{thm generator relation} $T^{-1}_k T^{-1}_{\sigma(k)+1} = T^{-1}_{k-1} T^{-1}_{\sigma(k)+4g-2}$. \end{lem}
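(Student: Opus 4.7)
The plan is to derive the identity directly from the two generator relations stated in the introduction: the involution $T_{\sigma(j)}T_j = \mathrm{Id}$ and the four-cycle relation $T_{\rho^3(k)}T_{\rho^2(k)}T_{\rho(k)}T_k = \mathrm{Id}$, where $\rho(k) = \sigma(k)+1$. The first step is to invert the four-cycle relation and rearrange it to
\[ T_k^{-1} T_{\rho(k)}^{-1} = T_{\rho^3(k)} T_{\rho^2(k)}. \]
Since $\rho(k) = \sigma(k)+1$, the left-hand side already matches the left-hand side of the lemma, so it only remains to identify the right-hand side with $T_{k-1}^{-1} T^{-1}_{\sigma(k)+4g-2}$. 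Using $T_j = T_{\sigma(j)}^{-1}$, this reduces to establishing the two index identities $\sigma(\rho^3(k)) \equiv k-1$ and $\sigma(\rho^2(k)) \equiv \sigma(k)+4g-2$ modulo $8g-4$.

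For the first identity, I would compute $\rho(\sigma(k-1)) = \sigma(\sigma(k-1))+1 = (k-1)+1 = k$, which shows $\sigma(k-1) = \rho^{-1}(k)$. Since $\rho$ has order $4$ (a one-line parity case-check against the definition of $\sigma$ confirms $\rho^4 = \mathrm{Id}$, or one may read this off the four-cycle relation), $\rho^3(k) = \sigma(k-1)$, and involutivity of $\sigma$ then gives $\sigma(\rho^3(k)) = k-1$. For the second, applying $\rho^{-1}$ once more produces $\rho^2(k) = \rho^{-1}(\sigma(k-1)) = \sigma(\sigma(k-1)-1)$, whence $\sigma(\rho^2(k)) = \sigma(k-1)-1$. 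The identity $\sigma(k-1) = \sigma(k)-4g+3$ displayed just after \Cref{thm admissibility helper} then converts this to $\sigma(k)-4g+2$, which differs from $\sigma(k)+4g-2$ by exactly $8g-4$.

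Substituting these two identifications back into the rearranged four-cycle relation yields the desired equality $T_k^{-1} T_{\sigma(k)+1}^{-1} = T_{k-1}^{-1} T_{\sigma(k)+4g-2}^{-1}$. The main obstacle is the small amount of mod-$(8g-4)$ bookkeeping that matches $\sigma(k-1)-1$ with $\sigma(k)+4g-2$; everything else is a straightforward symbolic rewriting of relations already assembled in the introduction.
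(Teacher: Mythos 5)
Your derivation is correct. Note that the paper itself offers no proof of this lemma at all: it is quoted as \cite[Lemma~3.2]{KU17}, so your self-contained argument from the two relations stated in the introduction is a genuinely different (and more explicit) route than the paper's citation. The steps all check out: inverting the vertex relation gives $T_k^{-1}T_{\rho(k)}^{-1}=T_{\rho^3(k)}T_{\rho^2(k)}$; the parity computation gives $\rho^2(k)=k+4g-2 \pmod{8g-4}$, hence $\rho^4=\mathrm{Id}$; the involutivity of $\sigma$ gives $\rho^{-1}(k)=\sigma(k-1)$, so $\sigma(\rho^3(k))=k-1$ and $\sigma(\rho^2(k))=\sigma(k-1)-1\equiv\sigma(k)+4g-2$, and substituting $T_{\rho^3(k)}=T_{k-1}^{-1}$, $T_{\rho^2(k)}=T_{\sigma(k)+4g-2}^{-1}$ yields the identity. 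The only loose remark is the aside that $\rho^4=\mathrm{Id}$ ``may be read off the four-cycle relation'': the group relation by itself does not force the index identity $\rho^4(k)=k$, but the parity case-check you also supply does establish it, so nothing is missing. What your approach buys is independence from the external reference: the lemma becomes a purely combinatorial consequence of $T_{\sigma(k)}T_k=\mathrm{Id}$, the vertex cycle relation, and mod-$(8g-4)$ bookkeeping with $\sigma$ and $\rho$, all of which are already in the paper.
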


\begin{lem} \label{thm generator inductive} For $m \ge 1$,
$
	T^{-1}_k \big( T^{-1}_{\sigma(k)+1} T^{-1}_{k+4g-1} \big)^m
	= \big( T^{-1}_{k-1} T^{-1}_{\sigma(k)+4g-3} \big)^m T^{-1}_k.
$
\end{lem}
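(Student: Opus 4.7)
My plan is to prove the identity by induction on $m$, with the base case $m=1$ doing essentially all the work and the inductive step being a purely formal concatenation. All indices are read modulo $8g-4$.

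\textbf{Base case $m=1$.} I want to show
\begin{equation*}
    T^{-1}_k T^{-1}_{\sigma(k)+1} T^{-1}_{k+4g-1} \;=\; T^{-1}_{k-1} T^{-1}_{\sigma(k)+4g-3} T^{-1}_k .
\end{equation*}
I will apply Lemma~\ref{thm generator relation} twice. A direct application on the leftmost pair of the LHS rewrites it as $T^{-1}_{k-1} T^{-1}_{\sigma(k)+4g-2} T^{-1}_{k+4g-1}$, so it suffices to check
\begin{equation*}
   T^{-1}_{\sigma(k)+4g-2} T^{-1}_{k+4g-1} \;=\; T^{-1}_{\sigma(k)+4g-3} T^{-1}_k ,
\end{equation*}
which I recognise as a second instance of Lemma~\ref{thm generator relation}, this time with the index $k$ of that lemma replaced by $j:=\sigma(k)+4g-2$. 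For the match to work, I need $\sigma(j)+1 = k+4g-1$ and $\sigma(j)+4g-2 = k$ (mod $8g-4$); both follow from $\sigma(j) \equiv k+4g-2$.

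\textbf{The index identity.} The short-cycle relation $\sigma(k-1)=\sigma(k)-(4g-3)$ stated in the paper promotes by iteration to $\sigma(j+n) \equiv \sigma(j)+n(4g-3) \pmod{8g-4}$. Applying this with $j=\sigma(k)$ and $n=4g-2$, and using the involution $\sigma(\sigma(k))=k$, I get $\sigma(\sigma(k)+4g-2) \equiv k+(4g-2)(4g-3)$. A short arithmetic check reduces $(4g-2)(4g-3)$ to $4g-2$ modulo $8g-4$, yielding exactly the needed value $k+4g-2$. This closes the base case.

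\textbf{Inductive step.} Assuming the identity for $m$, I will compute
\begin{align*}
    T^{-1}_k \bigl(T^{-1}_{\sigma(k)+1} T^{-1}_{k+4g-1}\bigr)^{m+1}
    &= \bigl[T^{-1}_k \bigl(T^{-1}_{\sigma(k)+1} T^{-1}_{k+4g-1}\bigr)^m\bigr]\, T^{-1}_{\sigma(k)+1} T^{-1}_{k+4g-1} \\
    &= \bigl(T^{-1}_{k-1} T^{-1}_{\sigma(k)+4g-3}\bigr)^m\, \bigl[T^{-1}_k T^{-1}_{\sigma(k)+1} T^{-1}_{k+4g-1}\bigr] \\
    &= \bigl(T^{-1}_{k-1} T^{-1}_{\sigma(k)+4g-3}\bigr)^m\, T^{-1}_{k-1} T^{-1}_{\sigma(k)+4g-3} T^{-1}_k \\
    &= \bigl(T^{-1}_{k-1} T^{-1}_{\sigma(k)+4g-3}\bigr)^{m+1} T^{-1}_k,
\end{align*}
invoking the inductive hypothesis on line two and the base case on line three.

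The main obstacle is the modular bookkeeping for $\sigma$: verifying that the reindexing $k \mapsto \sigma(k)+4g-2$ in the second use of Lemma~\ref{thm generator relation} produces exactly the factors $T^{-1}_{k+4g-1}$ on the left and $T^{-1}_k$ on the right (after reducing $k+8g-4 \equiv k$). Everything else is routine, but this alignment is what makes the proof work and is the only place where the specific arithmetic of the side-pairing $\sigma$ enters.
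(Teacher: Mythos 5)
Your proof is correct and follows essentially the same route as the paper: the base case $m=1$ is obtained by applying \Cref{thm generator relation} twice, the second time at the index $\sigma(k)+4g-2$, and the inductive step peels off one pair $T^{-1}_{\sigma(k)+1}T^{-1}_{k+4g-1}$ and reuses the base case. Your explicit check of the index identity $\sigma(\sigma(k)+4g-2)\equiv k+4g-2 \pmod{8g-4}$ merely spells out the modular bookkeeping the paper leaves implicit.
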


\begin{proof}
The base case, $m=1$, is proven using \Cref{thm generator relation} twice, the second time for index $\sigma(k)+4g-2$:
\[
	T^{-1}_k T^{-1}_{\sigma(k)+1} T^{-1}_{k+4g-1}
	= T^{-1}_{k-1} T^{-1}_{\sigma(k)+4g-2} T^{-1}_{k+4g-1}
	= T^{-1}_{k-1} T^{-1}_{\sigma(k)+4g-3} T^{-1}_k.
\]
Then $m > 1$ follows by induction:
\begin{align*}
    T^{-1}_k \big( T^{-1}_{\sigma(k)+1} T^{-1}_{k+4g-1} \big)^m
    &= T^{-1}_k \big( T^{-1}_{\sigma(k)+1} T^{-1}_{k+4g-1} \big)^{m-1} T^{-1}_{\sigma(k)+1} T^{-1}_{k+4g-1} \\*
    &= \big( T^{-1}_{k-1} T^{-1}_{\sigma(k)+4g-3} \big)^{m-1} T^{-1}_k T^{-1}_{\sigma(k)+1} T^{-1}_{k+4g-1} \\
    &= \big( T^{-1}_{k-1} T^{-1}_{\sigma(k)+4g-3} \big)^{m-1} T^{-1}_{k-1} T^{-1}_{\sigma(k)+4g-3} T^{-1}_k \\*
    &= \big( T^{-1}_{k-1} T^{-1}_{\sigma(k)+4g-3} \big)^m T^{-1}_k.
    \qedhere
\end{align*}
\end{proof}

\medskip
We are now ready to proceed with an inductive proof of \Cref{thm P as unions of Q}, with the following refinement of part~(\ref{item recode 17}):

\phantomsection
\label{a-and-b}
\begin{enumerate}[\quad(a)]
    \item If all $\omega_k$ are even, or if $(\omega_{n-1},\omega_n)$ are even but not of the form $(2m,2\sigma(m)+4)$ for any $m$, then $\CylI\P\omega = \CylI\Q{(\eta_0,\dots,\eta_n)}$ with $\eta_n$ even, and therefore \[ \qquad\quad \CylI\P\omega = \bigcup_{i=0}^{16g-14} \CylI\Q{(\eta_0,\dots,\eta_n,2\sigma(\eta_n/2)+4+i)}. \]
    
    \item If $\omega_n$ is even and either $\omega_{n-1}$ is odd or $(\omega_{n-1},\omega_n) = (2m,2\sigma(m)+4)$ for some $m$ (but not all $\omega_i$ are even), then $\eta_n$ is odd and
    \[ \qquad\qquad\CylI\P\omega = \CylI\Q{(\eta_0,\dots,\eta_n,\eta_{n+1})} \cup \CylI\Q{(\eta_0,\dots,\eta_n,\eta_{n+1}+1)} \cup  \bigcup_{i=0}^{16g-16} \CylI\Q{(\eta_0,\dots,\eta_n+1,2\sigma(\frac{\eta_n+1}{2})+6+i)}, \] 
    where $\eta_{n+1} = 2\sigma(\floor{\eta_n/2})+8g-5$.
\end{enumerate}

We begin with the base case $n=0$ for all parts. The original Markov partition sets $I_i$ are both $\P$- and $\Q$-cylinders:
\[ \CylI\P{(\omega_0)} = I_{\omega_0} = \CylI\Q{(\omega_0)}. \]
For $\omega_0 = 2k-1$ odd,
\begin{align*} \CylI\P{\omega} = \CylI\Q{(2k-1)} = \CylI\Q{(2k-1,2\sigma(k)+8g-5)} \cup \CylI\Q{(2k-1,2\sigma(k)+8g-4)}, \end{align*}
and for $\omega_0 = 2k$ even,
\begin{align*}
	\CylI\P{\omega} = \CylI\Q{(2k)}
	&= \CylI\Q{(2k,2\sigma(k)+4)} \cup \CylI\Q{(2k,2\sigma(k)+5)} 
	\cup \cdots \cup \CylI\Q{(2k,2\sigma(k)-5)} \cup \CylI\Q{(2k,2\sigma(k)-4)}
\end{align*}
by~\eqref{Markov odd Q} and~\eqref{Markov even} with $f_\A = f_\Q$.
	
For some parts of the proof, $n=0$ is a sufficient base case, but we do at times implicitly assume $n \ge 1$, so we also provide here a ``base case'' with $n=1$. If $\omega_0$ is even, then
\[ \CylI\P{(\omega_0,\omega_1)} = \CylI\Q{(\omega_0,\omega_1)}, \]
and equations (\ref{recode 2}) and (\ref{recode 17}) follow immediately when $\omega_1$ is odd, or, respectively, even.
If $\omega_0=2k-1$ is odd, then $\omega_1$ can be either $2\sigma(k)+2$ or $2\sigma(k)+3$. We investigate the interval $\CylI\P{(2k-1,2\sigma(k)+3)}$. 
For that, notice that
\(
    \CylI\P{(2k-1,2\sigma(k)+3)} = T^{-1}_k(I_{2\sigma(k)+3})
\).
From relation~\eqref{Markov odd Q} written for index $\sigma(k)+2$, the interval $I_{2\sigma(k)+3}$ itself can be expressed as
\begin{align*}
I_{2\sigma(k)+3}&=T^{-1}_{\sigma(k)+1}([P_{\sigma(\sigma(k)+1))-1},P_{\sigma(\sigma(k)+1))}])
=T^{-1}_{\sigma(k)+1}[P_{(k-2)+4g-2},P_{(k-2)+4g-1}] \\*
&=T^{-1}_{\sigma(k)+1}(I_{2(k-2)+8g-5}\cup I_{2(k-2)+8g-4)})
=T^{-1}_{\sigma(k)+1}(I_{2k+8g-9}\cup I_{2k+8g-8)}).
\end{align*}
Now we use \Cref{thm generator relation} 
to write
\begin{align*}
\CylI\P{(2k-1,2\sigma(k)+3)}&=T^{-1}_kT^{-1}_{\sigma(k)+1}(I_{2k+8g-9}\cup I_{2k+8g-8})\\
&=T^{-1}_{k-1}T^{-1}_{\sigma(k-1)-1}(I_{2k+8g-9}\cup I_{2k+8g-8}) \\
&=T^{-1}_{k-1}T^{-1}_{\sigma(k)+4g-2}(I_{2k+8g-9}\cup I_{2k+8g-8}) \\
&=T^{-1}_{k-1}\big(\CylI\Q{(2\sigma(k)+8g-4,2k+8g-9)}\cup \CylI\Q{(2\sigma(k)+8g-4,2k+8g-8)}\big) \\
&=\CylI\Q{(2k-1,2\sigma(k)+8g-4,2k+8g-9)}\cup \CylI\Q{(2k-1,2\sigma(k)+8g-4,2k+8g-8)}
\end{align*}
which proves~\eqref{recode 2}, that is, part (\ref{item recode 2}), for $n=1$.

The other $\P$-cylinder interval $\CylI\P{(2k-1,2\sigma(k)+2)}=\CylI\P{(2k-1)}\setminus \CylI\P{(2k-1,2\sigma(k)+3)}$. 
Since 
\begin{align*}
\CylI\P{(2k-1)}
=\CylI\Q{(2k-1)}
&=\CylI\Q{(2k-1,2\sigma(k)+8g-5)}\cup \CylI\Q{(2k-1,2\sigma(k)+8g-4)} \\
&=\CylI\Q{(2k-1,2\sigma(k)+8g-5,2k-1)}\cup \CylI\Q{(2k-1,2\sigma(k)+8g-5,2k)} \\* &\qquad \cup \bigcup_{\ell=2k+8g}^{2k+8g-8} \CylI\Q{(2k-1,2\sigma(k)+8g-4,\ell)}
\end{align*}
by \Cref{thm admissibility helper}(a) and, rewriting $\CylI\P{(2k-1,2\sigma(k)+3)}$ as the union of two $\Q$-cylinders above, we have
\begin{align*}
    \CylI\P{(2k-1,2\sigma(k)+2)}
    &= \CylI\Q{(2k-1,2\sigma(k)+8g-5,2k-1)} \cup \CylI\Q{(2k-1,2\sigma(k)+8g-5,2k)} \\* &\qquad \cup \bigcup_{\ell=2k+8g}^{2k+8g-10} \CylI\Q{(2k-1,2\sigma(k)+8g-4,\ell)},
\end{align*}
proving (\ref{recode 2+15}) for $n=1$.

\medskip
We proceed now with induction for $n \ge 2$. We say that a cylinder $\CylI\A{(\omega_0,\dots,\omega_n)}$ has \emph{rank} $n+1$. Assume $\CylI\P{\omega}$ of rank $\le n$ is a union of $\Q$-cylinders as desired; we want $\CylI\P{(\omega_0,\dots,\omega_n)}$ to be a union of $\Q$-cylinders of rank $n+2$.
	\smallskip
	
When $\omega_0$ is even, the induction argument is straightforward for both parts. We demonstrate it for part (\ref{item recode 2}), that is, when $\omega_n$ is odd.
Using the induction hypothesis for $\CylI\P{(\omega_1,\ldots,\omega_n)}$, we have
	\begin{align*} 
		\CylI\P{(\omega_0,\omega_1,\ldots,\omega_n)}
		&= T^{-1}_{\omega_0/2} \big( \CylI\P{(\omega_1,\ldots,\omega_n)} \big) \\*
		&= T^{-1}_{\omega_0/2} \big( \CylI\Q{(\eta_1,\ldots,\eta_n,\eta_{n+1})} \cup \CylI\Q{(\eta_1,\ldots,\eta_n,\eta_{n+1}+1)} \big) \text{ by induction} \\
		&= T^{-1}_{\omega_0/2} \CylI\Q{(\eta_1,\ldots,\eta_n,\eta_{n+1})} \cup T^{-1}_{\omega_0/2} \CylI\Q{(\eta_1,\ldots,\eta_n,\eta_{n+1}+1)} \\*
		&= \CylI\Q{(\omega_0,\eta_1,\ldots,\eta_n,\eta_{n+1})} \cup \CylI\Q{(\omega_0,\eta_1,\ldots,\eta_n,\eta_{n+1}+1)},
	\end{align*}
	where the final substitution uses the fact that $\eta_1 = \omega_1$ (from induction) and that the pair $(\omega_0,\eta_1) = (\omega_0,\omega_1)$ is $\P$-admissible if and only if it is $\Q$-admissible (because $\omega_0$ is even, and the even rows of $M_\P$ and $M_\Q$ are identical). Part (\ref{item recode 17}) can be treated similarly.

\medskip
We now prove parts (\ref{item recode 2}) and (\ref{item recode 17}) separately when $\omega_0$ is odd. 
\smallskip

\noindent\textbf{(\ref{item recode 2})}
From the induction hypothesis,
\[ \CylI\P{(\omega_1,\omega_2,\dots,\omega_n)}= \CylI\Q{(\xi_1,\xi_2,\dots,\xi_n,\xi_{n+1})} \cup \CylI\Q{(\xi_1,\xi_2,\dots,\xi_n,\xi_{n+1}+1)} \]
with $\xi_1 = \omega_1$ and $\xi_{n+1}$ odd. (We use $\xi$ here instead of $\eta$ because the terms $\xi_i$ will not necessarily be $\eta_i$ for $\CylI\P{(\omega_0,\dots,\omega_n)}$ from the statement of \Cref{thm P as unions of Q}.) Thus
\begin{align*}
    \CylI\P\omega
	&= T^{-1}_{\ceil{\omega_0/2}} \big(\CylI\P{(\omega_1,\omega_2,\omega_3,\dots,\omega_n)}\big) \\*
	&= T^{-1}_{\ceil{\omega_0/2}} \Big(\CylI\Q{(\omega_1,\xi_2,\xi_3,\dots,\xi_{n+1})} \cup \CylI\Q{(\omega_1,\xi_2,\xi_3,\dots,\xi_{n+1}+1)}\Big) \\
	&= T^{-1}_{\ceil{\omega_0/2}} T^{-1}_{\floor{\omega_1/2}} \Big(\CylI\Q{(\xi_2,\xi_3,\dots,\xi_{n+1})} \cup \CylI\Q{(\xi_2,\xi_3,\dots,\xi_{n+1}+1)}\Big).
\end{align*}
Let $\omega_0 = 2k-1$. Then $\omega_1$ must be $2\sigma(k)+2$ or $2\sigma(k)+3$, and either way $\floor{\omega_1/2} = \sigma(k)+1$, giving
\< \label{IPw first rewrite}
	\CylI\P\omega = T^{-1}_{k} T^{-1}_{\sigma(k)+1} \Big(\CylI\Q{(\xi_2,\xi_3,\dots,\xi_{n+1})} \cup \CylI\Q{(\xi_2,\xi_3,\dots,\xi_{n+1}+1)}\Big).
\>

There are now several cases and sub-cases to consider; these are summarized in \Cref{fig tree}.
\begin{figure}[htb] \vspace{-1em}
\includegraphics{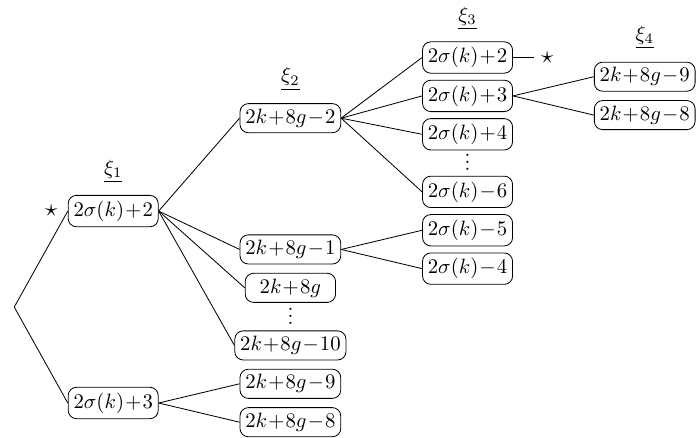}
\caption{Relevant cases when $\omega_0 = 2k-1$ is odd (all indices mod $16g-8$)}
\label{fig tree}
\end{figure}

If $\xi_1 = 2\sigma(k)+2$ then $\xi_2 \in \{2k+8g-2, 2k+8g-1, \dots, 2k+8g-10\}$, and if $\xi_1 = 2\sigma(k)+3$ then $\xi_2$ is $2k+8g-9$ or $2k+8g-8$. Other than when $\xi_2 \in \{ 2k+8g-2, 2k+8g-1 \}$, we can apply \Cref{thm generator relation} to~\eqref{IPw first rewrite} to get
\[ \label{eq:ipq}
	\CylI\P\omega = T^{-1}_{k-1} T^{-1}_{\sigma(k)+4g-2} \Big(\CylI\Q{(\xi_2,\xi_3,\dots,\xi_{n+1})} \cup \CylI\Q{(\xi_2,\xi_3,\dots,\xi_{n+1}+1)}\Big), \]
and then \Cref{thm admissibility helper}(\ref{item adm 3terms}) implies
\begin{align*}
    \CylI\P\omega
    &= T^{-1}_{\floor{\frac{2k-1}2}} T^{-1}_{\floor{\frac{2\sigma(k)+8g-4}2}} \Big(\CylI\Q{(\xi_2,\xi_3,\dots,\xi_{n+1})} \cup \CylI\Q{(\xi_2,\xi_3,\dots,\xi_{n+1}+1)}\Big) \\*
	&= \CylI\Q{(2k-1, 2\sigma(k)+8g-4, \xi_2,\xi_3,\dots,\xi_{n+1})} \cup \CylI\Q{(2k-1, 2\sigma(k)+8g-4, \xi_2,\xi_3,\dots,\xi_{n+1}+1)}.
\end{align*}
We are left with analyzing the cases $\xi_2=2k+8g-2$ and $\xi_2=2k+8g-1$, with $\xi_1 = 2\sigma(k)+2$. Here $\floor{\xi_2/2} = k+4g-1$ and so we proceed from~\eqref{IPw first rewrite} as
\begin{align*}
    \CylI\P\omega
	&= T^{-1}_k T^{-1}_{\sigma(k)+1} T^{-1}_{k+4g-1} \Big(\CylI\Q{(\xi_3,\dots,\xi_{n+1})} \cup \CylI\Q{(\xi_3,\dots,\xi_{n+1}+1)}\Big) \\*
	&= T^{-1}_{k-1} T^{-1}_{\sigma(k)+4g-3} T^{-1}_{k} \Big(\CylI\Q{(\xi_3,\dots,\xi_{n+1})} \cup \CylI\Q{(\xi_3,\dots,\xi_{n+1}+1)}\Big)
\end{align*}
using \Cref{thm generator inductive} with $m=1$.
If $\xi_2 = 2k+8g-2$, then $\xi_3 \in \{ 2\sigma(k)+2, 2\sigma(k)+3, \dots, 2\sigma(k)-6 \}$, and if $\xi_2 = 2k+8g-1$, then $\xi_3$ is $2\sigma(k)-5$ or $2\sigma(k)-4$. For all possible pairs $(\xi_2,\xi_3)$ \emph{except} $(\xi_2,\xi_3) = (2k+8g-2,2\sigma(k)+2)$ and $(\xi_2,\xi_3) = (2k+8g-2,2\sigma(k)+3)$, \Cref{thm admissibility helper}(\ref{item adm 4terms}) implies precisely that 
\begin{align*}
    \CylI\P\omega
	&= T^{-1}_{k-1} T^{-1}_{\sigma(k)+4g-3} T^{-1}_{k} \Big(\CylI\Q{(\xi_3,\dots,\xi_{n+1})} \cup \CylI\Q{(\xi_3,\dots,\xi_{n+1}+1)}\Big) \\*
	&= T^{-1}_{\floor{\frac{2k-1}2}} T^{-1}_{\floor{\frac{2\sigma(k)+8g-5}2}} T^{-1}_{\floor{\frac{2k}2}} \Big(\CylI\Q{(\xi_3,\dots,\xi_{n+1})} \cup \CylI\Q{(\xi_3,\dots,\xi_{n+1}+1)}\Big) \\*
	&= \CylI\Q{(2k-1,2\sigma(k)+8g-5,2k,\xi_3,\dots,\xi_{n+1})} \cup \CylI\Q{(2k-1,\dots,\xi_{n+1}+1)}.
\end{align*}

\smallskip
Now we only need to analyze the cases $\xi_3 = 2\sigma(k)+2$ and $\xi_3 = 2\sigma(k)+3$, where we have already set $\xi_2=2k+8g-2$ and $\xi_1 = 2\sigma(k)+2$. 

If $\xi_3=2\sigma(k)+3$, then $\xi_4$ is either $2k+8g-8$ or $2k+8g-9$, and so
\begin{align*}
    \CylI\P\omega
	&= T^{-1}_{k-1} T^{-1}_{\sigma(k)+4g-3} T^{-1}_{k} \Big(\CylI\Q{(\xi_3,\dots,\xi_{n+1})} \cup \CylI\Q{(\xi_3,\dots,\xi_{n+1}+1)}\Big) \\*
	&= T^{-1}_{k-1} T^{-1}_{\sigma(k)+4g-3} T^{-1}_{k} T^{-1}_{\sigma(k)+1} \Big(\CylI\Q{(\xi_4,\dots,\xi_{n+1})} \cup \CylI\Q{(\xi_4,\dots,\xi_{n+1}+1)}\Big) \\
	&= T^{-1}_{k-1} T^{-1}_{\sigma(k)+4g-3} T^{-1}_{k-1} T^{-1}_{\sigma(k)+4g-2} \Big(\CylI\Q{(\xi_4,\dots,\xi_{n+1})} \cup \CylI\Q{(\xi_4,\dots,\xi_{n+1}+1)}\Big) \quad\text{by \Cref{thm generator relation}} \\
	&= T^{-1}_{\floor{\frac{2k-1}2}} T^{-1}_{\floor{\frac{2\sigma(k)+8g-5}2}} T^{-1}_{\floor{\frac{2k-1}2}} T^{-1}_{\floor{\frac{2\sigma(k)+8g-4}2}} \Big(\CylI\Q{(\xi_4,\dots,\xi_{n+1})} \cup \CylI\Q{(\xi_4,\dots,\xi_{n+1}+1)}\Big) \\*
	&= \CylI\Q{(2k-1,2\sigma(k)+8g-5,2k-1,2\sigma(k)+8g-4,\xi_4,\dots,\xi_{n+1})} \cup \CylI\Q{(2k-1,\dots,\xi_{n+1}+1)}
\end{align*}
by \Cref{thm admissibility helper}(\ref{item adm 5terms}).

If $\xi_3=2\sigma(k)+2$, notice that $\xi_3 = \xi_1$, so we now analyze the situation when the sequence $(\xi_1,\dots,\xi_{n+1})$ consists of several alternating entries $(2\sigma(k)+2,2k+8g-2)$ until some $\xi_j \notin \{ 2\sigma(k)+2, 2k+8g-2 \}$ (this situation is denoted by~{\large$\star$} in \Cref{fig tree}). Notice that $j<n+1$: otherwise, all $\xi_1,\dots,\xi_{n}$ would be even, and then
\[
	\CylI\Q{(\xi_1,\xi_2,\dots,\xi_{n-1},\xi_n,\xi_{n+1})} 
	\subset \CylI\Q{(\xi_1,\xi_2,\dots,\xi_{n-1},\xi_n)}
	= \CylI\P{(\xi_1,\xi_2,\dots,\xi_{n-1},\xi_n)}
\]
would imply $\CylI\P{(\xi_1,\xi_2,\dots,\xi_n)}=\CylI\P{(\omega_1,\omega_2,\dots,\omega_n)}$, which is not possible since $\omega_n$ is odd.

We assume $j$ is odd (the case of even $j$ can be treated similarly). Then
\[ (\xi_1,\xi_2,\dots,\xi_{n+1})=(2\sigma(k)+2,2k+8g-2,\dots,2\sigma(k)+2,2k+8g-2,\xi_j,\dots,\xi_{n+1}), \]
where $\xi_j$ is one of $\{2\sigma(k)+3, \dots, 2\sigma(k)-6\}$. Thus
\begin{align}
    \CylI\P\omega
	&= T^{-1}_k \Big(\CylI\Q{(\xi_1,\xi_2,\xi_3,\dots,\xi_{n+1})} \cup \CylI\Q{(\xi_1,\dots,\xi_{n+1}+1)}\Big) \nonumber \\*
	&= T^{-1}_k \big( T^{-1}_{\sigma(k)+1} T^{-1}_{k+4g-1} \big)^{(j-1)/2} \Big(\CylI\Q{(\xi_j,\dots,\xi_{n+1})} \cup \CylI\Q{(\xi_j,\dots,\xi_{n+1}+1)}\Big) \nonumber \\*
	\label{periodic rewritten}
	&= \big( T^{-1}_{k-1} T^{-1}_{\sigma(k)+4g-3} \big)^{(j-1)/2} T^{-1}_k \Big(\CylI\Q{(\xi_j,\dots,\xi_{n+1})} \cup \CylI\Q{(\xi_j,\dots,\xi_{n+1}+1)}\Big)
\end{align}
by \Cref{thm generator inductive}. For $\xi_j \ne 2\sigma(k)+3$, \Cref{thm admissibility helper}(\ref{item adm 4terms}) implies that 
\begin{align*}
    \CylI\P\omega
	&= \big( T^{-1}_{\floor{\frac{2k-1}2}} T^{-1}_{\floor{\frac{2\sigma(k)+8g-5}2}} \big)^{(j-1)/2} T^{-1}_{\floor{\frac{2k}2}} \Big(\CylI\Q{(\xi_j,\dots,\xi_{n+1})} \cup \CylI\Q{(\xi_j,\dots,\xi_{n+1}+1)}\Big) \\*
	&= \CylI\Q{(2k-1, 2\sigma(k)+8g-5, \dots, 2k-1, 2\sigma(k)+8g-5, 2k, \xi_j,\dots,\xi_{n+1})} \cup \CylI\Q{(2k-1,\dots,\xi_{n+1}+1)},
\end{align*}
and for $\xi_j = 2\sigma(k)+3$ we proceed from~\eqref{periodic rewritten} with
\begin{align*}
    \CylI\P\omega
	&= \big( T^{-1}_{k-1} T^{-1}_{\sigma(k)+4g-3} \big)^{(j-1)/2} T^{-1}_k \Big(\CylI\Q{(\xi_j,\dots,\xi_{n+1})} \cup \CylI\Q{(\xi_j,\dots,\xi_{n+1}+1)}\Big) \\
	&= \big( T^{-1}_{k-1} T^{-1}_{\sigma(k)+4g-3} \big)^{(j-1)/2} T^{-1}_k T^{-1}_{\sigma(k)+1} \Big(\CylI\Q{(\xi_{j+1},\dots,\xi_{n+1})} \cup \CylI\Q{(\xi_j,\dots,\xi_{n+1}+1)}\Big) \\
	&= \big( T^{-1}_{k-1} T^{-1}_{\sigma(k)+4g-3} \big)^{(j-1)/2} T^{-1}_{k-1} T^{-1}_{\sigma(k)+4g-2} \Big(\CylI\Q{(\xi_{j+1},\dots,\xi_{n+1})} \cup \CylI\Q{(\xi_j,\dots,\xi_{n+1}+1)}\Big) \\
	&= \CylI\Q{(2k-1,2\sigma(k)+8g-5, \dots, 2k-1, 2\sigma(k)+8g-4, \xi_{j+1},\dots,\xi_{n+1})} \cup \CylI\Q{(2k-1,\dots,\xi_{n+1}+1)},
\end{align*}
using \Cref{thm generator relation} for a substitution and then \Cref{thm admissibility helper}(\ref{item adm 3terms}) for the final line since, following $\xi_j = 2\sigma(k)+3$, we know $\xi_{j+1}$ is either $2k-8g-9$ or $2k-8g-8$. Having followed all paths in \Cref{fig tree}, this completes the proof of part~(\ref{item recode 2}).

\medskip\noindent\textbf{(\ref{item recode 17})} When $\omega_n$ is even, there are two possible structures,~\eqref{recode 17} and~\eqref{recode 2+15}, for the decomposition of $\CylI\P\omega$, corresponding to the two cases (a) and (b) on page~\pageref{a-and-b}.

\smallskip\noindent\textbf{(a)} First, if all $\omega_i$ are even then $\CylI\P\omega = \CylI\Q\omega$ because the even rows of $M_\P$ and $M_\Q$ coincide. Then $\CylI\P\omega$ can be trivially decomposed into $16g-15$ cylinders of higher rank as in~\eqref{recode 17}.

If $(\omega_{n-1},\omega_n)$ are both even and not of the form $(2m,2\sigma(m)+4)$ for any $m$, then, from the induction hypothesis for case~(a), there exists a $\Q$-admissible sequence $(\xi_1,\xi_2,\dots,\xi_{n})$ such that
\[ \CylI\P{(\omega_1,\omega_2,\dots,\omega_n)} = \CylI\Q{(\xi_1,\xi_2,\dots,\xi_{n})}, \]
with $\xi_n$ even. Now the analogue of relation~\eqref{IPw first rewrite} is
\[ \CylI\P\omega = T^{-1}_{k} T^{-1}_{\sigma(k)+1} \big(\CylI\Q{(\xi_2,\dots,\xi_{n})}\big). \]
The sequence $(\xi_1,\xi_2,\dots,\xi_{n})$  cannot consist entirely of alternating even entries $(2\sigma(k)+2,2k+8g-2)$: if this were the case, then \[ \CylI\Q{(\xi_1,\xi_2,\dots,\xi_{n})}=\CylI\P{(\xi_1,\xi_2,\dots,\xi_{n})}, \text{ so } (\omega_1,\omega_2,\dots,\omega_n)= (\xi_1,\xi_2,\dots,\xi_{n}),\] 
which is impossible since the last two entries $(\omega_{n-1},\omega_n)$ are not of the form $(2m,2\sigma(m)+4).$ One can then proceed as in case (\ref{item recode 2}) and express $\CylI\P\omega$ as a single $\Q$-cylinder of rank $n+1$, which is then a union of $16g-15$ $\Q$-cylinders of rank $n+2$ as desired.

\noindent\textbf{(b)} If $\omega_{n-1}$ is odd or the final pair $(\omega_{n-1},\omega_n) = (2m,2\sigma(m)+4)$ for some $m$, then we have~\eqref{recode 2+15}, as will we now show.

We follow the proof of (\ref{item recode 2}), where a stricter key step $j<n$ will now follow from the new assumptions. Indeed, from the induction hypothesis for case~(b),
\[ 
    \CylI\P{(\omega_1,\omega_2,\dots,\omega_n)} = \CylI\Q{(\xi_1,\dots,\xi_n,\xi_{n+1})} \cup \CylI\Q{(\xi_1,\dots,\xi_n,\xi_{n+1}+1)} \cup \!\bigcup_{i=0}^{16g-16}\! \CylI\Q{(\xi_1,\dots,\xi_n\!+\!1,2\sigma(\frac{\xi_n+1}{2})\!+\!6\!+\!i)},
\] 
where $\xi_1=\omega_1$, $\xi_n$ is odd, and $\xi_{n+1} = 2\sigma(\floor{\xi_n/2})+8g-5$. The analogous statement to~\eqref{IPw first rewrite} is now
\[
    \CylI\P\omega = T^{-1}_{k} T^{-1}_{\sigma(k)+1} \Big( \CylI\Q{(\xi_2,\dots,\xi_{n+1})} \cup \CylI\Q{(\xi_2,\dots,\xi_{n+1}+1)} \cup \!\bigcup_{i=0}^{16g-16}\! \CylI\Q{(\xi_2,\dots,\xi_n\!+\!1,2\sigma(\frac{\xi_n+1}{2})\!+\!6\!+\!i)}\Big).
\]
Notice that it is not possible for the sequence $(\xi_1,\xi_2,\dots,\xi_n)$ to consist  entirely of alternating entries $(2\sigma(k)+2,2k+8g-2)$ because $\xi_n$ is odd. Nor can the sequence $(\xi_1,\xi_2,\dots,\xi_n+1)$ consist entirely of such alternating even entries because then $(\xi_{n-1},\xi_n)$ would not be $\Q$-admissible.

Therefore,  there exists $j<n$ such that the sequence $(\xi_1,\dots,\xi_j)$ stops alternating between $2\sigma(k)+2$ and $2k+8g-2$. We can then express each $T^{-1}_{k} T^{-1}_{\sigma(k)+1} \big(\CylI\Q{(\xi_2,\dots)}\big)$ above as a $\Q$-cylinder of rank $n+2$, thus making $\CylI\P\omega$ a union of $2+(16g-17)=16g-15$ $\Q$-cylinders of rank $n+2$. This does not affect the last two entries of the $\Q$-cylinders from the induction hypothesis, so the structure of the decomposition is as needed.

\end{document}